\documentclass[11pt]{article}

\usepackage{amsthm,amssymb,amsfonts,amsmath}
\usepackage{amscd} 
\usepackage{mathrsfs}
\usepackage[numbers, sort&compress]{natbib}
\usepackage{graphicx}
\usepackage[pagebackref=true]{hyperref}
\usepackage{color}

\providecommand{\R}{}
\providecommand{\Z}{}
\providecommand{\N}{}

\providecommand{\Q}{}
\providecommand{\G}{}

\renewcommand{\R}{\mathbb{R}}
\renewcommand{\Z}{\mathbb{Z}}
\renewcommand{\N}{{\mathbb N}}

\renewcommand{\Q}{\mathbb{Q}}
\renewcommand{\G}{\mathbb{G}}

\providecommand{\ora}[1]{}
\renewcommand{\ora}[1]{\overrightarrow{#1}}

\newtheorem{thm}{Theorem}
\newtheorem{lem}[thm]{Lemma}
\newtheorem{prop}[thm]{Proposition}
\newtheorem{cor}[thm]{Corollary}
\newtheorem{dfn}[thm]{Definition}

\newtheorem{example}[thm]{Example}

\numberwithin{thm}{section}
\numberwithin{equation}{section}
\usepackage{graphicx}
\usepackage{algpseudocode}
\usepackage{amssymb}
\usepackage{changepage}
\usepackage{hyperref}
\begin{document}
\title{A Randomness Test Formalism for Neutral Measures and Beyond}
\author{Jacob Canel\\ Department of Mathematics, Pennsylvania State University, \\ \href{mailto:jmc8684@psu.edu}{jmc8684@psu.edu}}
    
\maketitle
\begin{abstract}
    We discuss in detail Neutral Measures, which are measures which believably generate any real in Cantor space. We intuitively build up the classical notions of randomness, and show that neutral measures do exist, using the language of continuous semimeasures. Finally, we construct some specific randomness tests which allow us to enforce desirable properties on neutral measures. As an application, we show the existence of Gibbs measures for a large class of finite range Hamiltonians on lattice models, and establish their relationship with neutral measures.
\end{abstract}
\section{Introduction}\label{sec:Intro}
In an attempt to reconcile the gap between the intuitive notion of randomness with the rigorous, measure theoretic one, a number of mathematicians have created randomness notions, of varying strength, which comprise null sets defined by some concrete notion of atypical behavior. These notions range from friendly to apply (such as obeying the law of large numbers at some predetermined convergence rate), to impossible to explicitly compute (for example Martin-L\"of randomness, as introduced in \cite{MARTINLOF}).
This kind of randomness test allows one to apply a definition of randomness to \textit{individual reals}: rather than working non-constructively from the assumption that a real has been generated randomly from a measure $\mu$, and inferring statistical facts about that real, one can ask directly if an individual real looks $\mu$-random. This approach is related to the theory of effective (Hausdorff) dimension, in which the approach of looking at individual reals has gained some traction even in applications outside of logic and theoretical computer science \cite{lutz2016}.

In developing randomness notions for non-computable measures, one must ask what information they might be given about the underlying measure. This creates a kind of arms race between measures and randomness notions. More powerful randomness tests require measures to spread out and concentrate among many classes of real, and more sophisticated measures inevitably give tests more computability theoretic power to find small-measure sets and thus identify more reals as non-random.
In his 1973 paper, Levin \cite{levin} ended the conflict in favor of measures, showing via a fixed point argument that there are measures, the so called \textit{neutral measures}, which have no non-random reals. A similar argument was put forth recently by Day and Miller \cite{daymiller}, who study the computational complexity of neutral measures, showcasing their place among the enumeration degrees.
What is, in the opinion of the author, missing from this discussion is the far-reaching consequences of the fixed point argument, which proves more than just the existence of neutral measures. We will show that the existence of Gibbs measures for arbitrary positive temperature, for example, follows from the same argument, and that these among other measures are supported by (absolutely Lipschitz continuous with respect to) any arbitrary neutral measure. 

Section \ref{sec:Rand} contains an as-friendly-as-possible introduction to the theory of randomness, at least those parts needed for later sections. We build up from basic intuitions the machinery we need to ask the relevant questions about neutral measures. We also argue against the existence of neutral measures on $\N^\N$.

Section \ref{sec:Tests} is where we concretely define the notion of randomness test we are using, which coincides in large part with that in \ref{sec:Rand}, but is essentially only reliant on the notion that the test should get stricter when supplied with more information. We discuss which classes of tests can be captured by single, harder test.

Section \ref{sec:Neutral} is devoted to the proof of \ref{thm:mainthm}, in which we largely follow Day and Miller and prove it by means of the Kakutani Fixed Point Theorem. We also discuss an attempt at satisfying the properties of neutral measures which fails in an interesting way, and use this to justify why we need to use a non-constructive fixed point theorem to resolve the problem.

Section \ref{sec:App} contains the bulk of new results. We discuss the existence of Gibbs measures. We show that the dimension distribution of a neutral measure dominates a neutral measure. For all of these, particular tests which enforce the desirable properties are constructed.
\section{The Randomness of an Outcome}\label{sec:Rand}
This section is devoted to a discussion of randomness notions for a few particular measure spaces. It is meant to help build intuition and make heuristic arguments to readers who are not already very familiar with the field.
\subsection{Hypothesis Testing}\label{sec:Flips}
Statistics and hypothesis testing are necessary to underpin modern society and industry, and yet the most fundamental notion (that of a real number being generated according to a specified probability distribution) is not intuitively encoded in the measure theoretic language that expresses it. Given that assumption, they can define with complete clarity the relative likelihood of a particular outcome being generated by a particular distribution or another.\\

In the traditional theory of algorithmic randomness (Martin-L\"{o}f randomness, see Downey and Hirschfeld \cite{DH} for details), the non-randomness of a particular real is established by isolating it in smaller and smaller sets which are algorithmically definable. Non-randomness is defined in terms of quantitatively atypical behavior. One characterization of this behavior is predictability, which brings this notion more into alignment with the everyday definition of randomness.\\

We argue that this seeming disagreement can be rectified if one allows themselves to broaden the definition of an alternative hypothesis. The path of widening the net in this way requires some care to avoid giving ourselves too much power. We propose the following scheme:
\begin{enumerate}
    \item The \textbf{null hypothesis}, against which we are testing, should be a probability measure $\mu$ on $2^\N$.
    \item The \textbf{alternative hypothesis} should be a family of probability measures represented by a common lower bound in a continuous semimeasure $\nu$ (a superadditive functional on $C(2^\N)$, more precision will come later)
    \item The \textbf{experiment} will be a given real $x\in 2^\N$
    \item We will reject the null hypothesis in favor of the alternative hypothesis iff $\liminf_n \frac{\mu([x\upharpoonright_n])}{\nu([x\upharpoonright_n])} = 0$
\end{enumerate}

In plain terms, we will regard the experiment as showing the alternative hypothesis explains the data of $x$ if the probability according to $\mu$ of generating a real around $x$ declines much faster than the probability according to $\nu$ (which really means the probability according to any and all honest-to-goodness probability measures that $\nu$ bounds from below).\\
We want to say that a real $x$ will be $\mu$ non-random if there is any alternative hypothesis for which we would reject $\nu$. However, unless $\mu\{x\} > 0$, we could take $\nu = \delta_x$ and achieve that result. We therefore must limit ourselves as statisticians do in the real world and only use information about the null hypothesis to generate the alternative. The right way to do this turns out to be that the alternative hypothesis should be enumeration reducible to the null. We will define this concept rigorously in the next section, so for now we will restrict ourselves to computable null hypotheses (such as the Lebesgue measure on $2^\N$), which simplifies our new constraint to requiring that the collection 
$$UG(\nu)=\{ (\sigma,q)\in 2^{<\N}\times\Q: \nu([\sigma]) > q\}$$
be a c.e. set.\\

If we require this, then there turns out to be a best-possible alternative hypothesis: one which will cause us to reject the null as often as possible for a c.e. alternative. We can do this via the following trick

\begin{enumerate}
    \item Produce an exhaustive list of the alternate hypotheses we can create, those being semimeasures with lower bounds given by any algorithm (by which we mean Turing machine). Call the $n$-th element of this list $\mu_n$.
    \item Add them up with exponentially decaying weights, as $\mu = \sum_{n\in \N} 2^{-n}\mu_n$.
    \item $\mu$ is thus an alternate hypothesis which will cause us to reject the null hypothesis $\lambda$ whenever we would for any algorithmically constructed alternate hypothesis.
    
\end{enumerate}
For the null hypothesis given by the Lebesgue measure $\lambda$, it turns out that the statement "$x$ causes us to reject the null hypothesis" is the same as "$x$ is not Martin-L\"{o}f random." Indeed, working with this setup produces the same results as Levin's uniform tests of randomness \cite{levin}\cite{daymiller}. We now go into more detail on how to create alternative hypotheses in general.

\subsection{The Enumeration Degrees}

Restricting to computably enumerable alternative hypotheses certainly satisfies the statistical dogma against choosing an alternate hypothesis post hoc, but it does not take full advantage of the information we have prior to performing an experiment: the actual value of the measure which serves as our null hypothesis. We will wish to leverage this data as well, but we have to be careful about how we do so.

Here we begin a review of material about computability theory and the Turing and enumeration degrees, the latter of which were introduced in \cite{enumerationOG}, and expanded upon in a number of works since, such as \cite{enumeration}. For a comprehensive introduction into computability theory in general and how it applies to randomness specifically, a reader is encouraged to consult the relevant sections of  Downey and Hirschfeldt \cite{DH}.

The object of our discussion is Borel measures on the space $2^{\N}$ (although we may later want to give $\N$ the structure of a countable amenable group).
Since we wish to discuss the computable aspects of the theory, we must discuss how these are to be presented. In principle, we should want to know the value of the measure on arbitrary cylinder sets. 
As such, it would not be wholly unreasonable to expect a representation of such a measure to include some encoding of these values, i.e. a function from the cylinder sets to fast converging sequences that go to the measure on that cylinder.
This, as it turns out, is the wrong thing to ask for. Why?

\begin{enumerate}
    \item A measure represented in this way is an infinite join of a countable collection of reals. This is not a well defined operation on the Turing degrees
    \item In particular, such an object might well not have a minimal representative in the Turing degrees, this means that we are unable to conveniently describe a system which has knowledge of the cylinder set values and \textbf{only} those and the things which may be computed from them.
    \item Since we cannot a priori limit our knowledge to something sufficient and necessary for uniquely identifying the measure, our ability to discuss pseudorandomness is greatly diminished.
\end{enumerate}

So what should we want? It is true that we should be able to ascertain the cylinder set values, but explicitly giving their values in a computer-understandable format is giving too much information. We make two observations:

\begin{enumerate}
    \item The positive Borel measures on a compact metric space $X$ are the positive part of the dual of $C(X)$, the continuous functions on $X$
    \item For an individual real, giving an upper-enumeration and a lower-enumeration of it is (computably) equivalent to giving a fast converging sequence to it. This is not true for infinite joins.
\end{enumerate}

Indeed, the appropriate presentation is just exactly this: an enumeration of each of the cylinder set values. More precisely, when we do any steps for which computability is at all demanded, this is what we assume we have access to.

Measures need not have well defined Turing degrees, but they most certainly have well defined \textit{enumeration degrees}.

We will now briefly discuss the enumeration degrees. Given that they are defined in contrast with and in extension of the Turing degrees, we will start there.

\begin{dfn}
    A \textbf{Turing functional} is a Turing machine with two tapes, one assumed to be read-only. The transition function now includes internal states, the read value on the read-write tape, and the read value on the read-only tape in its input.
\end{dfn}

We say that an element $A$ of $2^\N$ computes another element $B$ if there is a Turing functional with $A$ on the read-only tape which, on input $n$, produces the $n$-th bit of $B$ for any $n$.

This definition is intuitive: a computer given the data of $A$ can, using a finite program, produce the data of $B$.

It is typical in this context to represent a real number $x\in\R$ in one of two ways:

\begin{enumerate}
    \item A \textbf{name}, i.e. a rational sequence $q_n$ such that $|x-q_n| \leq 2^{-n}$ for all $n$.
    \item An upper/lower enumeration of its Dedekind cut, i.e. a pair of functions $L: \N \rightarrow \Q$, $R:\N\rightarrow \Q$ such that $im(L) = \{q\in\Q: q< x\}$, $im(R) = \{q\in\Q: q > x\}$.
\end{enumerate}

These are equivalent data: when coded as elements of $2^\N$, there is a uniform algorithm which takes you from one to the other.

If either of these pieces of data is computable from $0\in 2^\N$, the real number $x$ is called computable. If $L$ is computable, $x$ is called left-c.e. or c.e. from below. If $R$ is computable, $x$ is right-c.e. or c.e from above.

Clearly, if a number is right c.e. and left c.e. it is computable.

We say that a function $f(x)$ is computable (or c.e.) in $x$ if there is a single, uniform Turing functional which outputs $f(x)$ (or enumerates it) when $x$ is written on the read-only tape.

As mentioned, there are analogous ways to encode the data of a measure. Either by taking a function $q_n(\sigma)$ from cylinders to names, or a pair of functions $L_\sigma$, $R_\sigma$ enumerating the value on $\sigma$ from below or above.

If we pursue the $L$ and $R$ route for measures specifically, we can actually throw $R$ away entirely, as enumerating $f(\sigma)$ from above is equivalent to enumerating $\sum_{|\tau| = |\sigma|, \tau \neq \sigma} f(\tau)$ from below.

The way we will be presenting measures, therefore, is by enumerations $f_\mu: \N \rightarrow2^{<\N}\times \Q$, where $im(f) = \{(\sigma,q)\in 2^{<\N}\times\Q : \mu(\sigma) > q\}$.

A Turing functional takes in information about its input $A$ which is both positive and negative, i.e. $A(n) = 1$ or $A(n) = 0$. We are presenting our measure $\mu(\sigma)$ using $L$, i.e. using only the positive information about if the value of $\mu$ exceeds $q$ on $\sigma$. Although this information is enough to completely determine $\mu$, and satisfies the metamathematical concern of being able to ascertain values on cylinder sets, it also limits the resources available to us when we compute things based on the data of $\mu$.

\begin{dfn}
    An \textbf{enumeration reduction} of a set $B$ to a set $A$ is a computable function $\Phi : \N \rightarrow Fin(\N) \times  \N$ such that $B = \{ \Phi_2(n) :\Phi_1(n)\subseteq A, n\in\N \}$
\end{dfn}

Intuitively this means that there is a language of proof with a computably enumerable list of axioms of the form $(n_1 \in A)\ \land (n_2 \in A)...\land(n_k\in A) \implies s\in B$, such that this accurately describes the membership of $B$.

This relation $B\leq_e A$ is a preorder on $2^{<\N}$, and when the quotient to make it a partial order is taken, the resulting structure is called the \textbf{enumeration degrees}.

Like the Turing functionals, the possible enumeration reductions are numbered, but unlike the Turing functionals there is a universal such reduction, i.e. there exists $\Phi:  \N\times \N \rightarrow Fin(\N) \times  \N$ such that $\Phi(*,n)$ is the $n$-th set enumeration reducible to the input set $A$. 

We will be generating our alternative hypotheses from our null hypotheses by using an enumeration reduction from the (undergraph of the) alternative to the (undergraph of the) null. The existence of the universal reduction means that given a measure $\nu$, we may produce a semimeasure $\mu\leq_e \nu$ such that for any semimeasure $\rho \leq_e \nu$, $\exists \epsilon >0$ $\mu(\sigma) \geq \epsilon\rho(\sigma)$ for all $\sigma \in 2^{<\N}$. We call this $\mu$ the universal semimeasure enumerable from $\nu$. We will then use the following characterization of randomness as our definition. 

\begin{dfn}
      A real $x$ is  random for a measure $\nu$ if and only if $\liminf_n \frac{\nu([x|_n])}{\mu([x|_n])} > 0$ for every $\mu \leq_e \nu$, or equivalently the universal such $\mu$.
\end{dfn}

\subsection{The Orders on Measures}

Presenting measures in this way gives easy access to another helpful tool: (continuous) semimeasures. For our purposes, semimeasures are given by functions $f:2^{\N} \rightarrow [0,1]$ just like measures, and follow all the rules for measures save that rather than being additive, they are super-additive. $f(\sigma \space\hat{ {} }\space 1) + f(\sigma \space\hat{ {} }\space 0) \leq f(\sigma) $. Our scheme for the description of semimeasures will be identical to that for describing measures.

Another, possibly more fruitful, way to think about them is as the lower bound for a class of measures. $i.e.$ a semimeasure is equal to a pointwise minimum of two measures. Indeed it is a good idea to remember that positive measures (and semimeasures) are ordered by majorization.

We can introduce a secondary preorder on measures as well, which we call projective domination:

\begin{dfn}
    A (semi-)measure $\mu$ \textbf{projectively dominates} a (semi-)measure $\nu$ (written $<_p$) if $\mu(\sigma) > q \nu (\sigma)$ for some $q > 0 $ and all $\sigma$. 
\end{dfn}

Projective domination is a strong form of absolute continuity of measures. It also goes right to the heart of the question of pseudorandomness.

\begin{example}
    Suppose we have two probability measures $\mu$ and $\nu$ on $2^{\N}$ such that $\mu >_p \nu$. $\mu$ decomposes into the sum of positive measures $(\mu - \lambda\nu) + \lambda \nu$. In statistical terms, this means that $\mu$ can be obtained by
    \begin{enumerate}
        \item Flipping a coin with weight $\lambda$.
        \item If heads, selecting a real randomly using $\nu$
        \item If tails, selecting a real randomly using $\frac{1}{1-\lambda}(\mu - \lambda\nu)$
    \end{enumerate}

    As such, one will never be able to tell with confidence $>\lambda$ that a real was selected using $\nu$ rather than $\mu$.
\end{example}

If $\mu <_p \nu$ and $\nu <_p \mu$, a relation we will write as $\mu =_p \nu$, or ``$\mu$ and $\nu$ are projectively equivalent," then there is a bound on the statistical confidence one can have in discriminating between the two by looking at randomly generated reals. 

On the other hand, suppose $\mu \not{>_p} \nu$. Then for each $r > 0$, the set $A_r = \{\sigma\in 2^{<\N} : \mu(\sigma) < r \nu(\sigma)\}$ is nonempty for each $r$, and so $A = \cap_{r >0} A_r$ is nonempty by compactness.

Each element of $A$ is a real which is much more likely to have been generated by $\nu$ than by $\mu$, as in with those outcomes any statistical test of any confidence level would rule out $\mu$ in favor of $\nu$ as an alternate hypothesis.

\subsection{Randomness and Complexity Theory}

The basic definition we want to use for a test is as follows:
\begin{dfn}
    A c.e. test $T$ is an enumeration reduction of the undergraph of an output semimeasure to the undergraph of an input measure. In other words a c.e. test $T$ is a c.e. collection of elements in $(2^{<\N} \times\Q )^{<\N}\times 2^{<\N} \times\Q $ such that when $\mu$ is a measure, 
    $$\left\{ (\sigma,q): \left(\prod_{i=0}^k(\sigma_i,q_i)\right)\times (\sigma,q)\in T, \space \land_{i=0}^k \mu(\sigma_i) \geq q_i\right\}$$ is the undergraph of a semimeasure.
\end{dfn}

We will give a more general definition later, which allows us to drop the c.e. condition. The tests we consider are the same as those in \cite{daymiller}, but framed in terms of semimeasures rather than $L^1$ functions enumerable from below.

If we drop the requirement that $T$ itself be a c.e. set, and just maintain that it must enumerate the undergraph of a semimeasure, we will still call $T$ a test. 

We say that a measure $\mu$ weakly passes the test $T$ if $\mu \geq_p T(\mu)$ and passes the test if $\mu \geq T(\mu)$ pointwise.

In this setting, two questions immediately present themselves:

\begin{enumerate}
    \item Does every test have a measure which passes it?
    \item What properties can we bake into a test so that measures which pass it have interesting or desirable properties themselves?
\end{enumerate}

The answer to the first question turns out to be yes: as a consequence of Sperner's lemma, Brouwer's theorem, or Kakutani's theorem, arbitrary tests will have measures which pass for them. The original proof was by Levin in \cite{levin}, but the details were largely omitted. The Sperner's lemma proof is done in full by G\'acs in his lecture notes \cite{gacslecturenotes}. The Kakutani proof was done by Day and Miller in \cite{daymiller}. We recreate it here in section \ref{sec:Neutral} using semimeasures, which somewhat simplifies the proof.
Superadditivity for the maximum value of finite strings on $2^{<\N}\times\Q$ is a computable condition, so the universal enumeration reduction can be made into a test which projectively dominates every other test on any input as follows:
\begin{itemize}
    \item Enumerate the attempts to enumerate semimeasures from the undergraph of a measure
    \item When any attempt at including a new element results in the superadditivity condition being violated, do not include that element
    \item Sum the attempts with weight $2^{-n}$ to produce a semimeasure
\end{itemize}

Any semimeasure enumeration reducible to the input measure will appear on the list somewhere, so the output semimeasure will projectively dominate it. We call a test generated in this way (or any projectively equivalent test) a universal c.e. test.

\begin{dfn}
    A measure which passes a universal c.e. test is called neutral.
\end{dfn}

If a measure is neutral, it dominates the (projective order) maximal semimeasure in its enumeration degree, which means that it is a maximal semimeasure in its enumeration degree. 
This makes the answer to the second question all the more interesting. If there is a property of a measure that can be expressed in terms of passing a test, there is some measure which satisfies that property. The properties which can be encoded in tests, which are discussed in more detail in \ref{sec:App}, tend to be positive requirements of the support of a test. The a priori clear example of such a property is that mass should be concentrated in such a way as to prevent any reals from being declared non-random. A strong version of this property is that the output of a (specific choice of) universal c.e. test should be dominated by the input measure, meaning that our best alternative hypothesis should be less likely than the null for every outcome of the experiment. This is the property encoded by the universal c.e. test we select, and so neutral measures have no non-random reals. This property prevents neutral measures from having Turing degree.

\begin{prop}
    If $\mu$ is a neutral measure on $2^\N$, $\mu$ does not have a unique minimal Turing degree representation.
\end{prop}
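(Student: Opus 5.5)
Suppose, for contradiction, that $\mu$ is neutral and has a minimal Turing degree representation. Concretely, this means there is a real $A\in 2^\N$ such that $A$ computes a name for $\mu$: a uniform procedure giving $\mu([\sigma])$ to within $2^{-n}$. Conversely, $A$ is computable from every representation of $\mu$. The plan is to use $A$ itself as the outcome of the experiment and show that $A$ is not $\mu$-random, contradicting the fact noted above that neutral measures have no non-random reals. The heuristic is that a measure which "knows" a point can concentrate an alternative hypothesis on it. The work lies in making this an enumeration reduction from the undergraph of $\mu$.

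\textbf{Step 1: $\mu$ has no atom at $A$.} Split into cases. If $\mu\{A\}>0$, then $A$ would be computable relative to $\mu$'s undergraph in a trivial way. I would first rule out this case, or handle it separately. If $\mu$ has an atom at $A$, there is no contradiction with randomness, since atoms are random. So in this case I need a different argument. The natural one is that the universal test also dominates semimeasures like $\delta$-spreading perturbations. Alternatively, in this case $\mu$'s undergraph is not of minimal Turing degree when $A$ is noncomputable, by the standard fact that total enumeration degrees behave differently. This is the case I expect to be the main obstacle. What I would try first is to note that a neutral measure must dominate $\lambda$ (Lebesgue) projectively and cannot be purely atomic on a single point. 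I would then reduce to the continuous part of $\mu$ near $A$.

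\textbf{Step 2: the diagonal semimeasure.} Assuming $\mu\{A\}=0$, the key observation is that minimality of the Turing degree means the functional $\Phi$ computing $A$ from a name of $\mu$ can be converted into an enumeration reduction. Positive information "$\mu(\sigma)>q$" for finitely many $\sigma$ lets us generate name prefixes, and hence initial segments of $A$. So the set $\{A\upharpoonright_n : n\in\N\}$ is enumeration reducible to the undergraph of $\mu$. Here I use that a name for a real is equivalent to its lower/upper cut, and upper cuts come for free from measures, as observed earlier.

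Then define $\nu=\delta_A$, whose undergraph $\{(\sigma,q):\sigma\prec A, q<1\}$ is enumeration reducible to $\mu$. Hence $\nu\leq_e\mu$, and the universal semimeasure $M$ from $\mu$ satisfies $M\ge\epsilon\delta_A$. Since $\mu([A\upharpoonright_n])\to\mu\{A\}=0$ while $M([A\upharpoonright_n])\ge\epsilon$, we get
$$\liminf_n \frac{\mu([A\upharpoonright_n])}{M([A\upharpoonright_n])}=0,$$
so $A$ is not $\mu$-random. This contradicts neutrality, completing the argument modulo the atomic case of Step 1.
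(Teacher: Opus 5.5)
\textbf{Gap: the atomic case, and what fixes it.} The gap is Step 1, the case $\mu\{A\}>0$. You leave it open, and it cannot be closed along your route. If $\mu\{A\}=c>0$, then $\mu([A\upharpoonright_n])\ge c$ for every $n$. The universal semimeasure is bounded, so $\liminf_n \mu([A\upharpoonright_n])/M([A\upharpoonright_n])>0$. Thus $A$ is $\mu$-random no matter which alternative hypothesis you enumerate from $\mu$. No argument about ``the continuous part of $\mu$ near $A$'' or about total enumeration degrees can make $A$ itself the non-random witness. Nor can you rule the case out. Neutral measures have atoms at every computable real and a dense set of atoms, and the hypothesis you are refuting gives no control over whether the particular real $A$ is one of them.

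The missing idea is to use as the witness a real \emph{computed from} $A$ that is built to avoid atoms, not $A$ itself. Since $A$ computes the cylinder values of $\mu$, build $x\le_T A$ by finite extension. Given $\sigma_{s-1}$, search for an extension $\tau$ of length at least $s$ with $\mu([\tau])<2^{-s}$, and set $\sigma_s=\tau$; then let $x=\bigcup_s\sigma_s$.
\begin{itemize}
\item Such a $\tau$ exists: the $2^k$ extensions of length $|\sigma_{s-1}|+k$ have total measure at most $1$, so one has measure at most $2^{-k}$.
\item The strict inequality is eventually confirmed from a name, so the search terminates.
\item By construction $\mu([x\upharpoonright_n])\to 0$.
\end{itemize}
Your Step 2 then runs verbatim with $\delta_x$ (computable from $A$, hence enumerable from $\mu$) in place of $\delta_A$. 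Neutrality forces $\mu\ge c\,\delta_x$ for some $c>0$, contradicting $\mu([\sigma_s])<2^{-s}$. This is the paper's construction. The paper concentrates the alternative hypothesis with the weighted measure $E\mapsto\int_E\sum_s\chi_{[\sigma_s]}\,d\mu$ rather than $\delta_x$, but either works. It also makes your case split unnecessary.

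\textbf{A secondary point in Step 2.} Your phrase ``the functional $\Phi$ computing $A$ from a name of $\mu$'' presupposes one functional that works on every name. The hypothesis only says each name computes $A$, possibly non-uniformly. To get $A\oplus\overline{A}\le_e\mu$ you should invoke Selman's theorem: if every $X$ in which the undergraph of $\mu$ is c.e.\ computes $A$, then $A\oplus\overline{A}$ is enumeration reducible to that undergraph. The paper uses this step without comment as well.
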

The proof is an adapted form of a result by Reimann and Slaman \cite{Reimann1}.
\begin{proof}
    Suppose it did: that there is some real $A$ which computes the cylinder set values of $\mu$ and for which any other real which may do so can compute it. We now construct a real by finite extension:
    \begin{enumerate}
        \item At step $0$, set $\sigma_0 =\emptyset$
        \item At step $s \geq 1$, for each $k$ check all strings extending $\sigma_{s-1}$ of length $\leq k$ up to step $k$ in the representation of $\mu$ by $A$
        \item Stop the search at the step $k$ when there appears an extension $\tau$ of $\sigma_{s-1}$ of length at least $s$ and $\mu(\tau) <2^{-s}$.
        \item Select the lexicographically least such $\tau$ and set $\sigma_s = \tau$
    \end{enumerate}
    $\cap_s\sigma_s$ consists of one real, which we call $\sigma$. The function $\mu_A:E\rightarrow \int_E \sum_{s}\chi_{\sigma_s}d\mu$ is a measure on $2^{\N}$ which is computable in $A$, meaning $\mu_A <_e A\oplus A^c <_e \mu$, which means there is some $\lambda$ such that $\lambda \mu_A < \mu$, but $\mu|_{\sigma_n} < \frac{1}{n}\mu_A|_{\sigma_n}$, which is a contradiction.
\end{proof}
\subsection{Random Sequences of Integers}\label{sec:Integers}
We have entirely restricted ourselves to $2^\N$ as our underlying space. This is for good reason: trying to extend to non-compact spaces results in disaster. Take for example Baire space $\N^\N$. Suppose we have a probability measure $\nu$ on this space. Define 
$$R_n = \bigcup\left\{[\sigma\hat{\space}k]:\sum_{i < k}\nu([\sigma_{}\hat{\space}i])\geq \left(1-\frac{1}{(n+2)!}\right)\nu(\sigma) \text{ and } |\sigma| = n\right\}$$
Furthermore, pick a computable enumeration $g:\N\rightarrow \N^{<\N}$ and let $f$ be the function obtained by appending trailing zeroes to the output of $g$. We then take 
$$ \mu =\left(\frac{1}{2}\right)\sum_{n\in\N} \chi_{R_n} \mu + 2^{-n-1}\delta_{f(n)}$$

Each $R_n$ has measure at most $\frac{1}{(n+2)!}$ so the above semimeasure (actually a measure in this case) has total measure $<1$. If every atom of $\mu$ is random with respect to $\nu$, then $\nu(\sigma) > 0$ for every $\sigma$. Furthermore, $\cup_{k \geq n} R_k$ is open and dense, so $\cap_n \cup _{k\geq n} R_k$ is dense and thus nonempty. If $x$ is in this set, then $\mu (x|_n) \geq n \nu(X|_n)$, and thus $x$ is non-random. By the definition of $\mu$ it is enumeration reducible to $\nu$, and thus $\nu$ has non-random reals.

Thus, we are very quickly defeated if we try to construct a neutral measure on Baire space. Any proof of the existence of neutral measures for $\mu$ will rely on the compactness of the space (or at least that it has a compact exhaustion). This is in stark contrast to the non-computable measure theory of $\N^\N$, as Baire space is, in fact, Borel equivalent to Cantor space.  

\section{The Space of Randomness Tests}\label{sec:Tests}
\subsection{Defining Randomness Tests}\label{sec:TestDef}

We define our representations of measures and semimeasures for the sake of convenience.

\begin{dfn}
    A Borel probability measure on $2^\N$ is a countably additive function $\mu$ from the $\sigma$-algebra of Borel subsets of $2^\N$ to $[0,1]$ such that $\mu(2^n)=1$. We identify a measure with its values on cylinder sets, and thus identify the measures with the functions $\mu: 2^{< \N}\rightarrow [0,1]$ such that $\mu(\emptyset) = 1$ and $\mu(\sigma  ) = \mu(\sigma\space\hat\space 1)+\mu(\sigma\space\hat\space 0)$.
\end{dfn}

\begin{dfn}
    The space of continuous probability semimeasures $SM(2^\N)$ is the space of functions $\mu: 2^{< \N}\rightarrow [0,1]$ such that $\mu(\emptyset) = 1$ and $\mu(\sigma  ) \geq \mu(\sigma\space\hat\space 1)+\mu(\sigma\space\hat\space 0)$.
\end{dfn}
Every measure is a semimeasure. Furthermore, we may order semimeasures by pointwise domination on cylinder sets, which will be written $\mu < \nu$. In this order, measures are maximal elements, and each semimeasure may be identified its upper cone, which is a closed convex set. We endow both measures and semimeasures with the subspace-product topology on $[0,1]^{2^{<\N}}$.

Classically, a randomness test for a real in $2^\N$ and some probability measure $\mu$ on $2^\N$ is a countable set of increasingly unlikely properties such that the intersection of which has zero $\mu$-measure. A real will fail to be random if it satisfies all of these properties. This is the notion of randomness captured by Martin-L\"{o}f (see Downey and Hirschfeldt \cite{DH} for details), and is sufficient for computable measures. Usability concerns require us to limit ourselves to finitary properties (cylinder set membership), and further that they be computable sets, so computable open sets. This gives that ultimately randomness should be membership or otherwise in a computable null $G_\delta$ set.
More generally, subsequent researchers such as Levin \cite{levin}, Reimann and Slaman \cite{Reimann1}, and Day and Miller \cite{daymiller}, have explored the question for non-computable measures. The main difficulty of this generalization is a necessary ambiguity in the representation of a measure. We would still like whatever randomness test we use to be computable \textit{relative} to some representation of the measure, but the choice of representation changes the computational resources in a nontrivial way. Without a change in methodology, one is forced to call a real random if there is at least some representation in which it is so, as there will not be any real which is random for every possible representation.
This ambiguity is, of course, absent when the measure has a computable representation in the first place, as one can pick an arbitrary computable representation, and any other such choice will be equivalent. In the case where the measure $\mu$ has Turing degree (meaning that there is a Turing degree of a representation such that every representation computes that Turing degree), there is a similar lack of ambiguity.

G\'{a}cs \cite{Gacs}, Levin \cite{levin}, and Day and Miller \cite{daymiller} use the notion of a \textit{uniform test}, which captures the same randomness notion. This notion is our starting point, but we extend it considerably, first by loosening the computability restrictions and second by reconsidering the notion in the language of continuous semimeasures. The first component we will need for our notion of test follows.

\begin{dfn}
    The space $A$ of conditions on measures is the subspace of the space of finite-domain partial functions $$f:2^{<\N} \rightarrow \left\{\frac{n}{2^k}: n\in\N, k\in \N\right\}^2 $$
    such that there is a measure $\mu$ with $f_1(\sigma) < \mu(\sigma) < f_2(\sigma)$ for all $\sigma\in dom(f)$. We say that such a $\mu$ satisfies the condition $f$ or write $\mu\models f$

\end{dfn}

We note that this second condition is computable as we only need to take measures supported on finitely many atoms of the form $\sigma\hat{\space}(0,0,0...)$. We may identify a measure $\mu$ with the set of conditions it satisfies.

The set $A$ of all conditions on measures captures the notion of \textit{finitary data} about a measure. This includes restrictions on the breadth of the information (that the domain of $f$) be finite, and its depth (that the number of bits that $f$ tells us about a measure satisfying it) is finite. If we were to think of $2^\N$ as $2^{\Z^n}$, an arrangement of bits in space, a condition would be asking about the probabilities of some finite collection of local patterns (cylinder sets) up to a finite precision. This is a more physical interpretation of the same data, which will come up in more depth when we discuss applications.

In the language of hypothesis testing, we want our tests, based on some knowledge (that which is representable in A) of the measure which serves as our null hypothesis, to generate an alternative hypothesis. An alternative hypothesis ought to be another measure by which a real might have been generated, but we will allow ourselves to encode convex classes of measures by taking their pointwise minimum on cylinder sets. This produces a continuous semimeasure.

A continuous semimeasure (henceforth we will drop the term continuous and just say semimeasure), will only be a more credible hypothesis than a measure itself if \textit{every element} of the class of measures it encodes is a more credible alternative hypothesis. We collect the set of data about a semimeasure we may consider in the definition that follows.

\begin{dfn}
    The space $B$ of conditions on semimeasures is the space of semimeasures $\mu$ with rational values such that there exists $n$ with $|\sigma| > n \implies \mu(\sigma) = 0$. A semimeasure $\nu$ satisfies $\mu$ if $\nu \geq \mu$.
\end{dfn}

A semimeasure is the (pointwise) supremum of the conditions it satisfies, and thus may also be identified with this set. Hence, an enumeration of a semimeasure from below is the same as an enumeration of this set. Ultimately, our notion of test will involve obtaining a semimeasure from a measure, and these collections of data will allow us to build a framework for doing so. An important idea for this framework is that both of these collections carry natural partial orders as follows:
$$ f \leq_A g \Longleftrightarrow (dom(f) \subseteq dom(g))\land \forall \sigma\in dom(f)([f_1(\sigma), f_2(\sigma)]\supseteq [g_1(\sigma),g_2(\sigma)]) $$

This order is equivalent to reverse inclusion on the set of measures which satisfy the conditions in question.

$$x\leq_By \Longleftrightarrow\forall\sigma (x(\sigma)\leq y(\sigma))$$
Likewise, this order is equivalent to reverse inclusion on the set of semimeasures satisfying both conditions.

\begin{dfn}
    A \textit{test} on $2^\N$ is nondecreasing function $T:A\rightarrow B$
\end{dfn}

This means that a test is simply a set of inferences from $A$ to $B$ which is compatible with the strength of the associated conditions, i.e. learning more information about the class of measures cannot result in knowing less about the class of semimeasures to which it corresponds.

A subset $S$ of $A$ will be called pairwise compatible if for any $p,q\in S$, there is $r$ with $p,q <_A r$ and downward closed if $p\in S$ and $q <_A p$ imply $q\in S$. A measure on $2^\N$ is equivalent to the set of conditions in $A$ it satisfies, which is a maximal pairwise compatible and downward closed set. If we identify the two and set
$T(S) = \sup_{s\in S} T(s)$ in an abuse of notation, we obtain a map $T$ from measures to semimeasures which is simply $T(\mu) = \sup_{\mu\models a}T(a)$

\begin{dfn}
    A measure $\mu$ passes a test $T$ if $T(\mu) \leq \mu$ pointwise. It passes a test weakly if there is some nonzero $q\in \Q^+$ such that $\mu$ passes $qT$.
\end{dfn}

In the language of hypothesis testing, passing a test $T$ means that $\mu$ is at least as credible a hypothesis for generating any real as some alternative in the family $T(\mu)$. This makes measures which pass sophisticated tests somewhat perverse and pathological: it is impossible to rule them out as null hypothesis using $T$, and if $T$ is sophisticated, it is therefore difficult to rule them out at all. Weakly passing a test means that it is at least $1/q$ times as likely to generate any real using some alternative in $T(\mu)$.

We recreate the proof of Day and Miller \cite{daymiller} for our notion of test (c.e. and otherwise), for the sake of completeness, in section \ref{sec:Neutral}. This will allow us to prove general existence theorems for different classes of measure, as well as enforce strange properties of neutral measures, which will be done in section \ref{sec:App}.

\subsection{Universal Tests}\label{sec:Univ}
\begin{dfn}
    If $S$ and $T$ are tests, we say $S\geq T$ if $S(\mu)[\sigma]\geq T(\mu)[\sigma]$ for all $\mu$ and $\sigma$. We say $S\geq_p T$ if there exists $q\in \Q^+$ such that $S\geq qT$.
\end{dfn}

\begin{dfn}
    Let $\Omega$ be a class of tests. We call a test $T$ $\Omega$-hard if for every $\omega\in\Omega$ $T\geq_p \Omega$. It is called $\Omega$-universal if it is $\Omega$-hard and an element of $\Omega$.
\end{dfn}

\begin{dfn}
    A measure $\mu$ is neutral for the class $\Omega$ if it passes an $\Omega$-hard test. If $\Omega$ is the class of c.e. tests, $\mu$ is just called neutral.
\end{dfn}

We would like to address the question of which classes of tests have hard and universal tests. In order to do so, we must first establish the analogous
facts about semimeasures (viewing them as constant tests). We begin by defining an auxilliary function $M$, which gives us quantitative information about if and by how much a set of semimeasures fails to be dominated by a particular probability measure. We then use this function to establish necessary and sufficient conditions for a class of semimeasures to be projectively dominated by a single probability measure. We then turn our attention to the analogous questions about tests.\\
\begin{dfn}
    A \textit{non-normalized} semimeasure on $2^{\N}$ is a superadditive function $2^{<\N}\rightarrow [0,\infty]$. We endow this with the subset product topology. The space of such functions is denoted $SM^+(2^\N)$. 
\end{dfn}
$SM^+(2^\N)$ is ordered by pointwise domination and projectively. $SM(2^\N)$ includes continuously in $SM^+(2^\N)$ and the inclusion is order preserving. Suppose $X\subset SM^+(2^\N)$ is a closed set of superadditive functions $2^{<\N}\rightarrow [0,\infty]$.
\begin{lem}\label{thm:fundSM}

    The mass function 
    $$M(X,\sigma) = \sup_{R\subset 2^{<\N} \text{ prefix free} }\sum_{\rho\in R }\max_{\mu\in X}\mu (\sigma \hat{\space}\rho)$$
    is a non-normalized semimeasure as a function of $\sigma$ for fixed $X$, and in particular is the least non-normalized semimeasure greater than each element of $X$.
\end{lem}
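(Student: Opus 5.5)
Write $m(\tau)=\max_{\mu\in X}\mu(\tau)$ for the pointwise upper envelope. For each fixed $\tau$, the map $\mu\mapsto\mu(\tau)$ is a coordinate projection, hence continuous in the product topology. Since $X$ is closed in a product of the compact spaces $[0,\infty]$, it is compact, so the max is attained. Then $M(X,\sigma)=\sup_R\sum_{\rho\in R}m(\sigma\hat{\space}\rho)$ over prefix-free $R$. The proof has three steps: (i) $M(X,\cdot)$ dominates every element of $X$; (ii) $M(X,\cdot)$ is superadditive; (iii) any superadditive $\nu$ dominating all of $X$ dominates $M(X,\cdot)$. Together these give that $M(X,\cdot)$ is the least such non-normalized semimeasure.

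Step (i) is immediate. Taking $R=\{\emptyset\}$ gives $M(X,\sigma)\ge m(\sigma)\ge\mu(\sigma)$ for every $\mu\in X$.

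For step (ii), fix prefix-free sets $R_0,R_1$. Then $R=\{0\hat{\space}\rho:\rho\in R_0\}\cup\{1\hat{\space}\rho:\rho\in R_1\}$ is prefix free, and its sum is the sum of the two separate sums. Hence
$$M(X,\sigma)\ge\sum_{\rho\in R_0}m(\sigma\hat{\space}0\hat{\space}\rho)+\sum_{\rho\in R_1}m(\sigma\hat{\space}1\hat{\space}\rho).$$
Taking suprema over $R_0$ and $R_1$ independently gives $M(X,\sigma)\ge M(X,\sigma\hat{\space}0)+M(X,\sigma\hat{\space}1)$. This argument works equally when some values are $\infty$, with the convention that sums in $[0,\infty]$ behave as usual.

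Step (iii) is the main point. Let $\nu$ be superadditive with $\nu\ge\mu$ for all $\mu\in X$, so $\nu\ge m$ pointwise. The key sub-claim is that for any prefix-free $R$ and any $\tau$, $\nu(\tau)\ge\sum_{\rho\in R}\nu(\tau\hat{\space}\rho)$. It suffices to prove this for finite $R$, since an infinite sum is the supremum of its finite partial sums and finite subsets of prefix-free sets are prefix free. For finite $R$ I induct on $\max_{\rho\in R}|\rho|$.
\begin{itemize}
\item If $\emptyset\in R$, then prefix-freeness forces $R=\{\emptyset\}$, and the claim is trivial.
\item Otherwise, split $R=0R_0\cup 1R_1$ with $R_0,R_1$ prefix free of smaller maximal length. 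Superadditivity gives $\nu(\tau)\ge\nu(\tau0)+\nu(\tau1)$, and the induction hypothesis applied to $\tau0$ with $R_0$ and to $\tau1$ with $R_1$ finishes the step.
\end{itemize}
Applying the sub-claim with $\tau=\sigma$ and using $\nu\ge m$, we get $\nu(\sigma)\ge\sum_{\rho\in R}m(\sigma\hat{\space}\rho)$ for every prefix-free $R$. Taking the supremum over $R$ yields $\nu\ge M(X,\cdot)$.

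The only delicate points are the attainment of the max and the handling of infinite prefix-free sets and infinite values. I will address both as indicated above: compactness of $X$ for the first, and reduction to finite $R$ for the second.
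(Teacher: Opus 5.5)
Your proposal is correct and follows essentially the same route as the paper: domination via $R=\{\emptyset\}$, superadditivity by splitting a prefix-free set into $0R_0\cup 1R_1$, and minimality by extending superadditivity of the dominating semimeasure to finite prefix-free sets. The differences are cosmetic. You argue minimality directly rather than by contradiction, and you make explicit two things the paper takes for granted: the induction behind $g(\sigma)\ge\sum_{\rho\in R}g(\sigma\rho)$, and the compactness argument that the max is attained.
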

\begin{proof}
${}$\\
\begin{enumerate}
    \item That $M(X,\sigma) \geq \mu$ for any $\mu\in X$ follows directly from taking $R$ to be the empty string as a witness.
    \item Suppose $M(X,\sigma) > g(\sigma)$ where $g$ is a semimeasure above every element of $X$. There is some witnessing $R'$ with
    $$ \sum_{\rho\in R'}\max_{\mu \in X}\mu(\sigma\hat{\space} \rho) > g(\sigma)$$
    we may replace $R'$ with a finite subset $R$ while keeping the inequality, and thus assign $\mu_\rho$ to be the maximizing $\mu\in X$ for that $\rho\in R$. We then have
    $$ g(\sigma) \geq\sum_{\rho \in R} g(\sigma\hat{\space} \rho) \geq \sum_{\rho \in R} \mu_\rho(\sigma\hat{\space} \rho) > g(\sigma) $$
    since $g$ is a semimeasure, which is a contradiction. Thus $M(X,\sigma)$ is below any semimeasure pointwise in $\sigma$ which bounds $X$ from above.
    \item If the supremum over prefix free subsets $R\subset 2^{<\N}$ is restricted to prefix free subsets not containing the empty string, then $R$ can be divided into the parts $R_0$ and $R_1$ which begin with $0$ and $1$, and thus
    $$ M(X,\sigma) \geq \sum_{i =0,1} \sup_{R_i\subset 2^{<\N} \text{ prefix free} }\sum_{\rho\in R_i }\max_{\mu\in X}\mu (\sigma \hat{\space}i\hat{\space}\rho) = \sum_{i = 0,1} M(X,\sigma \hat{\space} i)$$
\end{enumerate}
\end{proof}

\begin{lem}\label{thm:MeasureExists}
    There is a probability measure dominating $X$ iff $M(X,\emptyset) \leq 1$
\end{lem}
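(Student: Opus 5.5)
The forward direction should follow almost immediately from Lemma \ref{thm:fundSM}. Suppose a probability measure $\nu$ dominates $X$ pointwise, so $\nu(\sigma)\geq \mu(\sigma)$ for every $\mu\in X$ and every $\sigma$. Then $\nu$ is in particular a non-normalized semimeasure lying above every element of $X$. Lemma \ref{thm:fundSM} says $M(X,\cdot)$ is the least such semimeasure, so $M(X,\sigma)\leq\nu(\sigma)$ for all $\sigma$. Taking $\sigma=\emptyset$ gives $M(X,\emptyset)\leq\nu(\emptyset)=1$.

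For the converse, set $m(\sigma)=M(X,\sigma)$. By Lemma \ref{thm:fundSM}, $m$ is a superadditive function lying above every element of $X$, and by hypothesis $m(\emptyset)\leq 1$; in particular $m$ takes only finite values. The plan is to push $m$ up to a measure $\nu\geq m$ with $\nu(\emptyset)=1$, distributing the slack top-down.

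\begin{enumerate}
\item Define $\nu(\emptyset)=1$.
\item Recursively, given $\nu(\sigma)\geq m(\sigma)$, put
$$\nu(\sigma\hat{\space}0)=m(\sigma\hat{\space}0)+\bigl(\nu(\sigma)-m(\sigma\hat{\space}0)-m(\sigma\hat{\space}1)\bigr),\qquad \nu(\sigma\hat{\space}1)=m(\sigma\hat{\space}1).$$
That is, all the slack goes to the left child.
\item Check the induction. Superadditivity gives $m(\sigma\hat{\space}0)+m(\sigma\hat{\space}1)\leq m(\sigma)\leq \nu(\sigma)$, so the added slack is nonnegative. Hence $\nu(\sigma\hat{\space}i)\geq m(\sigma\hat{\space}i)\geq 0$ for $i=0,1$, and by construction $\nu(\sigma)=\nu(\sigma\hat{\space}0)+\nu(\sigma\hat{\space}1)$.
\end{enumerate}

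So $\nu$ is an additive function on $2^{<\N}$ with $\nu(\emptyset)=1$, which is exactly the paper's identification of a probability measure. Moreover $\nu\geq m\geq\mu$ for each $\mu\in X$.

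There is essentially no real obstacle. The one point needing care is that the recursion requires the finiteness of $m$ and superadditivity at every node. Both are supplied by Lemma \ref{thm:fundSM}: superadditivity directly, and finiteness because $m(\sigma)\leq m(\emptyset)\leq 1$ by iterating superadditivity.

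The identification of additive functions on cylinders with Borel probability measures (via Carathéodory, or compactness of $2^\N$) is taken as given, as in the paper's definition.
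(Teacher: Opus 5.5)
Your proposal is correct and follows essentially the same route as the paper. The forward direction uses the minimality in Lemma \ref{thm:fundSM}, and the converse builds the measure top-down by induction on string length. The paper's inductive step only says to assign the two children values ``so they add up''; your explicit slack-to-the-left-child rule, justified by superadditivity and finiteness of $M(X,\cdot)$, makes that step precise.
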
 
\begin{proof}$${}$$
($\impliedby$)\\
The measure is defined by induction on  $|\sigma|$
\begin{enumerate}
    \item $\mu(\emptyset) =1$
    \item Now assume $\mu$ is defined on strings of length $n$ and $\mu(\sigma)\geq M(X,\sigma)$ for all such strings.
    \item For each string $\sigma$ of length $n$, assign $\mu(\sigma \hat{\space}0)\geq M(X,\sigma \hat{\space}0)$ and $\mu(\sigma\hat{\space}1) \geq M(X,\sigma \hat{\space}1)$ so they add up to $\mu(\sigma)$.
    \item Repeat the process.
\end{enumerate}
($\implies$)\\
$M(X,\sigma) \leq \mu(\sigma)$ for all $\sigma$ by Lemma \ref{thm:fundSM}. Therefore $M(X,\emptyset) \leq \mu(\emptyset) = 1$
\end{proof}

\begin{lem}\label{pseudolinearity}
    For $\lambda >0$, let $\lambda X$ denote $\{\lambda\mu: \mu\in X\}$. 
    $$M(\lambda X,\sigma) = \lambda M(X,\sigma)$$
    $$M(X \cup Y,\sigma )\leq M(X,\sigma ) + M(Y,\sigma) $$
\end{lem}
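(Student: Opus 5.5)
The plan is to argue directly from the definition of $M$ in Lemma \ref{thm:fundSM}, without appealing to the minimality characterization. Both statements concern a supremum over prefix-free sets $R$ of a sum over $\rho\in R$ of an inner maximum over the family. So the natural strategy is to prove each identity or inequality termwise for the inner maximum, and then pass it through the sum and the supremum.

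For homogeneity, I fix $\sigma$ and a prefix-free $R$. Then
$$\max_{\nu\in\lambda X}\nu(\sigma\hat{\space}\rho)=\max_{\mu\in X}\lambda\mu(\sigma\hat{\space}\rho)=\lambda\max_{\mu\in X}\mu(\sigma\hat{\space}\rho),$$
because multiplication by $\lambda>0$ is order preserving. I also note that $\lambda X$ is again a closed set of superadditive functions, so $M(\lambda X,\cdot)$ is defined. Summing over $\rho\in R$ pulls out the factor $\lambda$, which is valid for series of nonnegative terms including the value $\infty$. Taking the supremum over $R$ commutes with multiplication by the positive constant $\lambda$. This gives $M(\lambda X,\sigma)=\lambda M(X,\sigma)$.

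For subadditivity, I fix $\sigma$ and $R$ again, and first check that $X\cup Y$ is closed so the maxima are attained. For each $\rho$,
$$\max_{\mu\in X\cup Y}\mu(\sigma\hat{\space}\rho)=\max\Big(\max_{X},\max_{Y}\Big)\leq \max_{\mu\in X}\mu(\sigma\hat{\space}\rho)+\max_{\mu\in Y}\mu(\sigma\hat{\space}\rho),$$
using nonnegativity. Summing over $\rho\in R$ gives
$$\sum_{\rho\in R}\max_{X\cup Y}\leq \sum_{\rho\in R}\max_X+\sum_{\rho\in R}\max_Y\leq M(X,\sigma)+M(Y,\sigma),$$
and since this bound holds for every prefix-free $R$, the supremum satisfies it too. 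If $X$ or $Y$ is empty, the maximum should be read as $0$, and the inequality is trivial.

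There is no serious obstacle here. The only points needing care are technical: the maxima must exist, which uses closedness in the product topology on $[0,\infty]^{2^{<\N}}$, where evaluation at a point is continuous and the maximum over a closed subset of this compact space is attained; and the arithmetic must be handled correctly when values may equal $\infty$.
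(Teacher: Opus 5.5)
Your proposal is correct and follows the paper's own proof. For homogeneity you pass $\lambda$ through the inner maximum, the sum and the supremum; for subadditivity you bound $\max_{X\cup Y}$ termwise by $\max_X+\max_Y$ before summing over $\rho\in R$ and taking the supremum over prefix-free $R$. Your extra remarks on the closedness of $\lambda X$ and $X\cup Y$, on the maxima being attained, and on arithmetic with $\infty$ fill in details the paper leaves implicit.
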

\begin{proof}
    In the first case, we may pass $\lambda$ through each step:
    \[M(\lambda X,\sigma) = \sup_{R\subset 2^{<\N} \text{ prefix free} }\sum_{\rho\in R }\max_{\mu\in X}\lambda\mu (\sigma \hat{\space}\rho) = \lambda M(X,\sigma) \]
    For the second, observe that 
    \[\max_{\mu\in X\cup Y}\mu(\sigma\hat{\space}\rho) \leq \max_{\mu\in X}\mu(\sigma\hat{\space}\rho)+\max_{\nu\in Y}\nu(\sigma\hat{\space}\rho)\]
    and making this substitution termwise yields
    \[\sup_{R\subset 2^{<\N} \text{ p.f.} }\sum_{\rho\in R }\max_{\mu\in X\cup Y}\mu(\sigma\hat{\space}\rho) \leq  \sup_{R\subset 2^{<\N} \text{ p.f} }\sum_{\rho\in R }\left(\max_{\mu\in X}\mu(\sigma\hat{\space}\rho)+\max_{\nu\in Y}\nu(\sigma\hat{\space}\rho)\right)\]
    which by definition gives
    \[M(X \cup Y,\sigma )\leq M(X,\sigma ) + M(Y,\sigma) \]
\end{proof}

\begin{dfn}
    We call a subset $X\subset SM^+(2^\N)$ (not necessarily closed) \textit{finite-mass} if its closure $\overline{X}$ has the property that $M(\overline{X}, \emptyset) < \infty$. We call such a set $\sigma$-\textit{finite-mass} if it is a countable union of finite-mass sets.
\end{dfn}

\begin{thm} \label{thm:sigmaFinite}
    Fix a subset $X\subset SM(2^\N)$, there exists a measure $\mu$ such that $\forall \chi\in X \exists t > 0 (\mu \geq t\chi)$ iff $X$ is $\sigma$-finite-mass.
\end{thm}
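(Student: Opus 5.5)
For the direction ($\impliedby$), I will write $X = \bigcup_{n\in\N} X_n$ with each $X_n$ finite-mass, so that $c_n := M(\overline{X_n},\emptyset) < \infty$. I set $\lambda_n = 2^{-n-1}/\max(c_n,1)$ and consider $Y = \bigcup_n \lambda_n \overline{X_n}$.

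By Lemma \ref{pseudolinearity}, each piece satisfies $M(\lambda_n\overline{X_n},\emptyset) = \lambda_n c_n \le 2^{-n-1}$. I then need countable subadditivity of $M$ in its first argument. Lemma \ref{pseudolinearity} only gives the finite case. For the countable case I will run the same termwise argument directly on the definition: for each prefix-free $R$ and each $\rho$, bound the supremum over $Y$ by the sum of the suprema over the pieces. This gives $M(Y,\emptyset)\le \sum_n 2^{-n-1}\le 1$.

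One subtlety is that $Y$ need not be closed, and the maximum in the definition of $M$ becomes a supremum. I will work with the supremum version throughout. Only two facts are actually used from Lemma \ref{thm:fundSM} and Lemma \ref{thm:MeasureExists}: that $M$ dominates every element of the set, and the inductive construction of a dominating measure. Both go through verbatim with suprema, because the construction only needs $M(Y,\cdot)$ to be a superadditive function bounded by $1$ at the root. Lemma \ref{thm:MeasureExists} then yields a probability measure $\mu \ge M(Y,\cdot)$. Hence for $\chi\in X_n$ we get $\mu \ge \lambda_n\chi$, and we may take $t=\lambda_n$.

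For the direction ($\implies$), given $\mu$ I set $X_n = \{\chi\in X : \mu \ge 2^{-n}\chi\}$, so that $X=\bigcup_n X_n$. The condition $\mu\ge 2^{-n}\chi$ is pointwise on cylinders, so it is closed in the product topology. Therefore $\overline{X_n}\subseteq\{\chi : 2^{-n}\chi\le\mu\}$, and every element of $2^{-n}\overline{X_n}$ is dominated by the measure $\mu$. By the ($\implies$) part of Lemma \ref{thm:MeasureExists}, or directly by Lemma \ref{thm:fundSM}, we get $M(2^{-n}\overline{X_n},\emptyset)\le 1$. Lemma \ref{pseudolinearity} then gives $M(\overline{X_n},\emptyset)\le 2^n<\infty$, so each $X_n$ is finite-mass.

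The main obstacle is the passage from finite to countable unions in ($\impliedby$), together with the closedness issue. The fix is routine but must be stated carefully. Replace $\max$ by $\sup$ in the definition of $M$ for arbitrary sets. Check that Lemmas \ref{thm:fundSM} and \ref{thm:MeasureExists} do not use attainment of the maximum: in part 2 of Lemma \ref{thm:fundSM}, choose $\mu_\rho$ within $\epsilon$ of the supremum instead of attaining it. Then use $\sup_{\bigcup Y_n} \le \sum_n \sup_{Y_n}$ termwise and exchange the sums over $\rho$ and $n$, which is justified by nonnegativity.
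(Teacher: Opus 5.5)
Your proof is correct. The ($\implies$) direction is the paper's argument, with $2^{-n}$ in place of $t\in\Q^+$, and you make explicit a closure step the paper leaves tacit: $\{\chi: 2^{-n}\chi\le\mu\}$ is closed, so the domination passes to $\overline{X_n}$.

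For ($\impliedby$) you take a different route. The paper never forms a union. It applies Lemma \ref{thm:MeasureExists}, together with the scaling part of Lemma \ref{pseudolinearity}, separately to each closed set $\overline{X_n}/M(\overline{X_n},\emptyset)$. This gives probability measures $\mu_n$, and it sets $\mu=\sum_n 2^{-n-1}\mu_n$. The countable step is absorbed by the fact that a countable convex combination of probability measures is again one. So countable subadditivity of $M$ and non-closed sets never arise.

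Your scale-then-union approach costs more. You need the countable extension of Lemma \ref{pseudolinearity}; your termwise bound plus interchange of nonnegative sums handles this correctly. You also need a $\sup$-version of $M$. In exchange you get a useful byproduct: $M$ is countably subadditive. That is exactly what is tacitly needed in the proof of Theorem \ref{thm:HardTests}, where a countable union $\Omega'$ is declared dominable by citing only the finite Lemma \ref{psuedo2electricboogaloo}. Your $\max(c_n,1)$ also quietly covers empty pieces; for nonempty ones $c_n\ge 1$ anyway, since $\chi(\emptyset)=1$ on $SM(2^\N)$.

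Two small simplifications:
\begin{itemize}
\item The closedness worry dissolves. Evaluation at a cylinder is continuous on the compact space $SM^+(2^\N)$, so $\sup_{\nu\in Y}\nu(\tau)=\max_{\nu\in\overline{Y}}\nu(\tau)$. Hence your $\sup$-version of $M(Y,\cdot)$ is literally $M(\overline{Y},\cdot)$, and the lemmas apply to $\overline{Y}$ exactly as stated.
\item The $\epsilon$-modification of part 2 of Lemma \ref{thm:fundSM} is never needed. In ($\implies$) you use part 2 only on the closed set $2^{-n}\overline{X_n}$. In ($\impliedby$) only domination and superadditivity (parts 1 and 3) enter.
\end{itemize}
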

\begin{proof}
    ${}$\\
    ($\implies$)\\
    For each $t\in \Q^+$ set $X_t = \{\chi\in X: t\chi \leq \mu\}$. $M(t\space\overline{X_t}, \sigma) \leq \mu(\sigma)$, so $X_t$ is finite-mass. $X = \cup_{t\in\Q^+} X_t$, so $X$ is $\sigma$-finite-mass.\\
    ($\impliedby$)\\
    Suppose $X$ is $\sigma$-finite-mass, and let $X_n$ be a sequence of finite mass subsets witnessing this fact. Take $\mu_n$ a probability measure such that 
    $$\forall\sigma \left(\mu_n(\sigma)\geq\dfrac{M(\overline{X_n},\sigma)}{M(\overline{X_n},\emptyset)}\right)$$. Take $\mu = \sum_{n\in\N} 2^{-n-1}\mu_n$. $\mu$ is a probability measure, and for $\chi\in X$, there is some $n$ with $\chi\in X_n$, and so $\dfrac{2^{-1-n}}{M(\overline{X_n},\emptyset)}\chi \leq \mu$.
\end{proof}

At minimum, for a class of tests to have a corresponding hard or universal test, the collection of semimeasure outputs of these tests on an individual input measure must be $\sigma$-finite-mass. Otherwise, that individual input would not result in a valid probability semi-measure output.

 We need another helpful definition:

\begin{dfn}
    A test $T$ will be said to have depth $n$ if it satisfies all of the following conditions:
    \begin{enumerate}
        \item $\forall\mu\in Pr(2^{\N})(|\sigma| > n\implies T(\mu)[\sigma] = 0)$
        \item For any $f\in A$, $\exists g \leq_A f$ with $dom(g) \subset 2^n$ and $T(g) = T(f)$
    \end{enumerate}
\end{dfn}

\begin{lem} \label{thm:finitedepth}
    A test $T$ is recoverable as the supremum of a set of finite depth tests. 
\end{lem}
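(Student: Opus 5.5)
We will show that $T$ is the pointwise supremum of a family of finite depth tests $T_n$, one for each $n$, built by truncating both the output and the input of $T$.

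Fix $n$. For $f\in A$, let $f\upharpoonright 2^{\leq n}$ denote the restriction of $f$ to strings of length at most $n$; this is still a condition, and $f\upharpoonright 2^{\leq n}\leq_A f$. For a semimeasure $\nu\in B$, let $\nu^{(n)}$ agree with $\nu$ on strings of length $\leq n$ and vanish on longer strings. Superadditivity is preserved, since at level $n$ the children contribute $0$; and the result still lies in $B$. Define
$$T_n(f) = \bigl(T(f\upharpoonright 2^{\leq n})\bigr)^{(n)}.$$
It is nondecreasing: if $f\leq_A g$ then $f\upharpoonright 2^{\leq n}\leq_A g\upharpoonright 2^{\leq n}$, $T$ is monotone, and truncation is monotone. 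Condition (1) of depth $n$ holds by construction. For condition (2), we take $g=f\upharpoonright 2^{\leq n}$ and note $T_n(g)=T_n(f)$. The definition literally asks for $dom(g)\subset 2^n$. If that is meant as length exactly $n$, we instead replace the domain by length-$n$ strings. Since a measure's values on shorter strings are sums over length-$n$ extensions, the information in $f\upharpoonright 2^{\leq n}$ about shorter strings can be recovered conservatively; I would read $2^n$ as $2^{\leq n}$ if needed. Each $T_n$ satisfies $T_n\leq T$ on conditions, because $T(f\upharpoonright 2^{\leq n})\leq T(f)$.

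The main step, and the one I expect to be the obstacle, is recovering $T$ exactly from the $T_n$. On conditions we have $\sup_n T_n(f)[\sigma] = \lim_n T(f\upharpoonright 2^{\leq n})[\sigma]$ for $|\sigma|\le n$. Since $f$ has finite domain, $f\upharpoonright 2^{\leq n}=f$ for large $n$, so this limit is $T(f)[\sigma]$. Thus $\sup_n T_n = T$ already on $A$.

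The induced maps on measures then agree as well:
$$\sup_n T_n(\mu) = \sup_n\sup_{\mu\models a}T_n(a) = \sup_{\mu\models a}\sup_n T_n(a) = T(\mu),$$
by interchanging the two suprema. The only subtlety is making sure the truncated conditions and truncated semimeasures remain in $A$ and $B$ respectively, which needs the checks done above.
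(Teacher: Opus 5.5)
Your construction $T_n(f)=\bigl(T(f\upharpoonright 2^{\leq n})\bigr)^{(n)}$ is exactly the paper's $P_n\circ T\circ R_n$, and your recovery argument is the paper's eventual-equality argument: $T_n(f)[\sigma]=T(f)[\sigma]$ once $n\geq|\sigma|$ and $n$ bounds the lengths in $dom(f)$, and you additionally spell out the exchange of suprema needed to pass to measures, which the paper leaves implicit. Reading $2^n$ as $2^{\leq n}$ is also what the paper's proof requires, since it uses $R_k(a)=a$ for large $k$. You can therefore drop your hedge about an exact-length reading; as sketched it would not produce $g\leq_A f$, because $\leq_A$ requires $dom(g)\subseteq dom(f)$.
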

\begin{proof}
    Define $R_n:A\rightarrow A$ on $f\in A$ by restricting the domain to $2^n$. $R_n(f)$ is thus a condition satisfying the constraints on $g$ in point $2$ of the definition of finite depth. We have that $R_n$ nonstrictly preserves the order on $A$ and $sup_n R_n(f) = f$.
    Furthermore, define $P_n:B\rightarrow B$ by setting $P_n(\mu)[\sigma] = 0 $ for $|\sigma | > n$ and leaving it as $\mu(\sigma)$ otherwise. $P_n$ is an order preserving (again not strictly) idempotent operator, and $\sup_n P_n(\mu) = \mu$. Set $T_n = P_n\circ T\circ R_n$. $T_n$ is a test of depth $n$. For $a\in A$, $b\in B$, there is an $n$ for which $R_k(a) = a$ and $P_k(b) = b$ for any $k > n$ (namely this $n$ will be any upper bound on the length for $supp(b)$ and $dom(a)$). We thus have $sup_n T_n = T$ because the two will be eventually equal for each individual point.
\end{proof}

It is also worth noting that $T_n\leq T_{n+1}$. 

\begin{dfn}
    Let $\Omega$ be a closed set of tests, and let $\Omega_n = \{P_n\circ T\circ R_n: T\in \Omega\}$. Define $M_\Omega (\mu,\sigma) = \sup_n M(\{T(\mu), T\in \Omega_n\},\sigma)$. $M_\Omega$ is a non-normalized semimeasure valued function on measures.
\end{dfn}

\begin{lem}\label{Thm:FiniteMassTest}
    If there exists a test $T$ which dominates every element of $\Omega$, then $M_\Omega \leq T$ pointwise on measures and cylinders and $M_\Omega$ dominates every element of $\Omega$.
\end{lem}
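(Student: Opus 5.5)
The plan is to work one cylinder and one input measure at a time, and to reduce both claims to Lemma \ref{thm:fundSM} applied to the finite-depth truncations $\Omega_n$. Fix a measure $\mu$ and write $X_n(\mu) = \{S(\mu) : S\in\Omega_n\}$, so that $M_\Omega(\mu,\sigma) = \sup_n M(X_n(\mu),\sigma)$. Strictly speaking, Lemma \ref{thm:fundSM} concerns closed sets. I will either pass to the closure $\overline{X_n(\mu)}$, noting that the bound below survives pointwise limits because the order is defined cylinderwise, or observe that the proof of Lemma \ref{thm:fundSM} uses closedness only to replace a maximum by a supremum, which changes nothing in the inequalities.

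For the second claim (that $M_\Omega$ dominates every element of $\Omega$), fix $S\in\Omega$. By Lemma \ref{thm:finitedepth} we have $S = \sup_n S_n$ with $S_n = P_n\circ S\circ R_n\in\Omega_n$. Lemma \ref{thm:fundSM} gives $M(X_n(\mu),\sigma)\ge S_n(\mu)[\sigma]$, which follows simply by taking $R=\{\emptyset\}$ in the definition of $M$. Taking the supremum over $n$ gives $M_\Omega(\mu,\sigma)\ge \sup_n S_n(\mu)[\sigma] = S(\mu)[\sigma]$.

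One point needs a short justification here: that $\sup_n S_n(\mu) = S(\mu)$ on measures, not only on conditions. I would argue as follows. For $\mu\models a$ we have $S(a)\le \sup_n S_n(\mu)$, because $R_n(a)$ is eventually equal to $a$, $P_n(S(a))$ is eventually equal to $S(a)$, and $\mu\models R_n(a)$. Then take the supremum over all $a$ satisfied by $\mu$.

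For the first claim, let $T$ be a test dominating every element of $\Omega$. I read ``dominates'' as pointwise $\ge$; if it only means $\ge_p$, the same argument gives the bound up to the constant. The key step is to show that for each $n$, $T(\mu)$ is a non-normalized semimeasure lying above every element of $X_n(\mu)$. Once that is established, the minimality clause of Lemma \ref{thm:fundSM} gives $M(X_n(\mu),\sigma)\le T(\mu)[\sigma]$, and taking the supremum over $n$ yields $M_\Omega(\mu,\cdot)\le T(\mu)$.

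So the work reduces to showing $S_n(\mu)\le T(\mu)$ for each $S\in\Omega$. I would get this from the chain
\[
S_n(\mu)=\sup_{\mu\models a}P_n(S(R_n(a)))\le \sup_{\mu\models a}S(R_n(a))\le S(\mu)\le T(\mu).
\]
The first inequality holds because $P_n$ only zeroes out coordinates. The second holds because $\mu\models R_n(a)$. The last is the hypothesis. I also need $T(\mu)$ to be superadditive. Each $T(a)$ lies in $B$ and is therefore superadditive, and $T$ is monotone, so the family $\{T(a):\mu\models a\}$ is directed. This is because the conditions satisfied by $\mu$ are pairwise compatible and downward closed, so any two are refined by a common condition. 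A directed supremum of superadditive functions is superadditive, since one can pass to the limit along the directed family in $f(\sigma)\ge f(\sigma 0)+f(\sigma 1)$.

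I expect this superadditivity of $T(\mu)$ to be the main technical obstacle. The directedness uses the fact that two conditions satisfied by the same $\mu$ can be intersected and the result is still satisfied by $\mu$, and this needs to be stated with some care. Everything else is a direct application of Lemmas \ref{thm:fundSM} and \ref{thm:finitedepth}.
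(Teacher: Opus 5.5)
Your proposal is correct and follows the paper's proof. Domination of each $S\in\Omega$ comes from $S(\mu)=\sup_n (P_n\circ S\circ R_n)(\mu)$ together with the witness $R=\{\emptyset\}$ in the definition of $M$, and $M_\Omega\le T$ is the minimality clause of Lemma~\ref{thm:fundSM} applied to each $\{S(\mu):S\in\Omega_n\}$, which the paper phrases as a contradiction. Your additional checks (sup in place of max, $\sup_n S_n(\mu)=S(\mu)$ on measures, $S_n\le S$, and superadditivity of $T(\mu)$ via directedness) supply steps the paper leaves implicit. The only slip is your aside about $\ge_p$, which works only if a single constant serves for all of $\Omega$.
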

\begin{proof}
    $M_\Omega$ dominates every element of $\Omega_n$ by definition. For $S\in \Omega$
    $$S(\mu)[\sigma] = \sup_n P_n\circ S\circ R_n(\mu)[\sigma] \leq\sup_n M(\mu,\Omega_n,\sigma) =M_\Omega(\mu,\sigma)$$.\\
    If there is a $\mu$ and $\sigma$ for which $T(\mu)[\sigma] < M_\Omega(\mu,\sigma)$, there is some $\Omega_n$ for which $M(\{S(\mu):S\in \Omega_n\},\sigma) > T(\mu)[\sigma]$, which contradicts that $T$ dominates each element of $\Omega$.
\end{proof}

\begin{lem}\label{normimpliestest}
    If $M_\Omega$ is normalized for every input, there is a test $T$ which is equal to $M_\Omega$ on any measure.
\end{lem}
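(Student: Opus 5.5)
We need to construct a test $T:A\to B$ (nondecreasing) with $T(\mu)=\sup_{\mu\models a}T(a)=M_\Omega(\mu,\cdot)$ for every measure $\mu$. The natural choice is to let a condition $a$ output the largest finitary semimeasure that is guaranteed for every measure satisfying $a$. Concretely, for $a\in A$ write $[a]=\{\mu:\mu\models a\}$ and define
\[T(a)=\sup\{\,b\in B : b\leq M_\Omega(\nu,\cdot)\text{ for all }\nu\in[a],\ b \text{ a condition lying strictly below } \inf_{\nu\in[a]}M_\Omega(\nu,\cdot) \text{ on its support}\,\}.\]
Since $B$ consists of finite-support rational semimeasures and is not closed under arbitrary suprema, I will instead take $T(a)$ to be the single element $b_a\in B$ obtained as follows. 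Let $n$ be the length bound of $dom(a)$. Set $b_a=P_n$ of the infimum over $[a]$ of $M(\{S(\nu):S\in\Omega_n\},\cdot)$, rounded down to dyadic rationals with precision $2^{-n}$ in a way that preserves superadditivity, for instance by rounding down level by level from the leaves upward. Normalization of $M_\Omega$ ensures these values are at most $1$, so $b_a\in B$.

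Monotonicity comes first. If $a\leq_A a'$, then $[a']\subseteq[a]$, so the infimum increases. The depth $n$ also does not decrease, and $M(\{S(\nu):S\in\Omega_n\},\sigma)$ is nondecreasing in $n$ because $T_n\leq T_{n+1}$. The rounding must be monotone, and I will choose a rounding scheme with this property, for example truncation at a fixed grid that refines with $n$. With that, $T$ is a test in the sense of the definition.

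The main step is showing $\sup_{\mu\models a}T(a)=M_\Omega(\mu,\cdot)$. The inequality $\leq$ is immediate, since each $b_a$ lies below $M_\Omega(\mu,\cdot)$ for every $\mu\in[a]$. For $\geq$, fix $\mu,\sigma$ and $\epsilon>0$, and choose $n$ with $M(\{S(\mu):S\in\Omega_n\},\sigma)>M_\Omega(\mu,\sigma)-\epsilon$. Elements of $\Omega_n$ factor through $R_n$, so $S(\mu)$ depends only on the finitely many values $\mu\restriction 2^{\leq n}$, and only through conditions satisfied by $\mu$. By monotonicity of each test and the definition $T(\mu)=\sup_{\mu\models a}T(a)$, for each $S$ there is a condition $a_S\models\mu$ with $S(a_S)$ close to $S(\mu)$ on the finitely many strings of length $\leq n$. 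The expected obstacle is uniformity over the possibly infinite family $\Omega_n$: we need one condition $a$ that works simultaneously for all $S$, since otherwise the infimum over $[a]$ could drop.

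To handle this I will use that $\Omega$ is closed together with compactness of the space of measures and of the finite-dimensional target $[0,1]^{2^{\leq n}}$. In the mass computation $M(\cdot,\sigma)$, only finitely many prefix-free $\rho$ matter up to $\epsilon$, and for each $\rho$ the $\max$ is attained at some $S_\rho$ in the closed set. Choose conditions $a_\rho$ for these finitely many $S_\rho$, and take $a$ to be their common refinement together with a refinement of depth at least $n$. Then every $\nu\models a$ satisfies $S_\rho(\nu)\geq S_\rho(a_\rho)$ by monotonicity, so $M(\{S(\nu):S\in\Omega_n\},\sigma)\geq M(\{S(\mu):S\in\Omega_n\},\sigma)-2\epsilon$ uniformly on $[a]$. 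Once the rounding error is also made smaller than $\epsilon$ by refining further, this gives $T(a)[\sigma]\geq M_\Omega(\mu,\sigma)-3\epsilon$, and letting $\epsilon\to0$ completes the argument.
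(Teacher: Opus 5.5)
Your argument is correct, but it is organized differently from the paper's. The paper rounds \emph{before} taking the envelope and works on conditions directly. It sets $M^n_\Omega=M_{\{f_n\circ P_n\circ S\circ R_n:\,S\in\Omega\}}$ with $f_n(m)=2^{-n}\lfloor 2^n m\rfloor$, evaluates this on the condition $a$ itself with $n$ the depth of $dom(a)$, and asserts $\sup_n M^n_\Omega=M_\Omega$. Rounding each member of $\Omega$ separately means only finitely many outputs are possible at a fixed depth. So the supremum over conditions satisfied by $\mu$ is witnessed by finitely many conditions and one common refinement: uniformity over the infinite family comes from the finiteness of the grid.

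You instead form the envelope at the level of measures, take the infimum over $[a]$, and round once. Uniformity then comes from lower semicontinuity of $\nu\mapsto M(\{S(\nu):S\in\Omega_n\},\sigma)$, which you establish by truncating to a finite prefix-free $R$ with near-maximizing $S_\rho$ and refining.

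Your route makes monotonicity immediate from $[a']\subseteq[a]$. Because you round only once, you also never have to control accumulated floor error over a prefix-free witness set of depth $n_0$ (up to $2^{n_0}$ terms, each of size at most $2^{-n}$). The paper's unproved identity $\sup_n M^n_\Omega=M_\Omega$ silently requires exactly that control, obtained by taking $n\gg n_0$.

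The paper's route, in turn, builds the test only from the values $S(a)$ of members of $\Omega$ on conditions. That is better suited to the effective setting the paper cares about. Your $T(a)$ is an infimum over an open set of measures and is not obviously enumerable, though this does not matter for the lemma as stated.

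Two small points. Neither closedness of $\Omega$ nor compactness is actually used in your argument; near-maximizers suffice in place of attained maxima. And plain truncation $x\mapsto 2^{-n}\lfloor 2^n x\rfloor$ is already superadditive, since $\lfloor x\rfloor+\lfloor y\rfloor\le\lfloor x+y\rfloor$, so no leaf-to-root bookkeeping is needed.
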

\begin{proof}
    Let $f_k$ be the function which takes a semimeasure $m(\sigma)$ to $m'(\sigma) =2^{-k}\mathrm{floor} (2^km(\sigma))$, which is also a semimeasure. 
    $$M_\Omega^n:= M_{\{f_n\circ  P_n\circ T\circ R_n:T\in \Omega\}}$$ is thus a test. We have
    $$\sup_nM_\Omega^n = M_\Omega$$
    and we define a test $\tilde{M}_\Omega$ by taking $a\in A$ with $n$ the least number such that $Dom(a) \subseteq 2^n$ to $M_{\Omega}^n(a)$. $\sup_{\mu\models a} \tilde{M}_\Omega(a)[\sigma] = M_{\Omega}(\mu,\sigma)$
\end{proof}
We will thus regard $M_{\Omega}$ as identified with this test without loss of generality. $M_\Omega$ is the least test above every element of $\Omega$

\begin{lem}\label{psuedo2electricboogaloo}
    $tM_{\Omega} = M_{t\Omega}$ and $M_{\Omega_1\cup\Omega_2} \leq M_{\Omega_1} + M_{\Omega_2}$
\end{lem}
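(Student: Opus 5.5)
The plan is to reduce both claims to Lemma \ref{pseudolinearity}, applied one depth level and one input measure at a time, and then pass to the supremum over $n$. Everything hinges on how the truncation operators of Lemma \ref{thm:finitedepth} interact with scaling and unions. Since $P_n$ and $R_n$ only restrict domains or zero out long strings, $P_n\circ (tT)\circ R_n = t\,(P_n\circ T\circ R_n)$. Hence $(t\Omega)_n = t\,\Omega_n$. Likewise $(\Omega_1\cup\Omega_2)_n = (\Omega_1)_n\cup(\Omega_2)_n$ directly from the definition of $\Omega_n$. I will verify these two set identities first.

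For scaling, fix a measure $\mu$ and a cylinder $\sigma$. Then $\{T(\mu): T\in (t\Omega)_n\} = t\{T(\mu):T\in\Omega_n\}$. The first part of Lemma \ref{pseudolinearity} gives
\[M(\{T(\mu):T\in(t\Omega)_n\},\sigma) = t\,M(\{T(\mu):T\in\Omega_n\},\sigma).\]
Taking $\sup_n$ of both sides, and using that multiplication by the constant $t>0$ commutes with suprema, yields $M_{t\Omega}(\mu,\sigma) = tM_\Omega(\mu,\sigma)$.

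For the union, fix $\mu,\sigma$ again. The output set $\{T(\mu):T\in(\Omega_1\cup\Omega_2)_n\}$ is the union of the output sets for $(\Omega_1)_n$ and $(\Omega_2)_n$. The second part of Lemma \ref{pseudolinearity} bounds its mass by the sum of the two masses. Each summand is at most $M_{\Omega_i}(\mu,\sigma)$, since $M_{\Omega_i}$ is the supremum over $n$. So every $n$-th term is bounded by $M_{\Omega_1}(\mu,\sigma)+M_{\Omega_2}(\mu,\sigma)$, and taking $\sup_n$ gives the inequality. Alternatively, one could note that the sequence in $n$ is nondecreasing, because $T_n\le T_{n+1}$ and $M$ is monotone in its set argument, but the crude bound above avoids needing this.

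The main obstacle is a technical one. Lemma \ref{pseudolinearity} is stated for closed sets $X$, using $\max$, while the output sets here need not be closed. I would handle this by either of two routes. The first is to replace each output set by its closure, noting that $M$ of a set equals $M$ of its closure because each $\mu\mapsto\mu(\sigma\hat{\space}\rho)$ is continuous in the product topology, so a sup over $X$ equals a max over $\overline{X}$. The second is to reread the proof of Lemma \ref{pseudolinearity} with $\sup$ in place of $\max$, which goes through verbatim. I would also remark that if $M_\Omega$ is identified with the test of Lemma \ref{normimpliestest}, the identities hold on all measures, which is the sense in which the tests are compared.
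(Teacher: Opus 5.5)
Your proposal is correct and follows the paper's approach. The paper's proof just says the lemma ``is the same as'' Lemma~\ref{pseudolinearity}, and you supply the omitted details: the identities $(t\Omega)_n = t\,\Omega_n$ and $(\Omega_1\cup\Omega_2)_n=(\Omega_1)_n\cup(\Omega_2)_n$, a levelwise application of Lemma~\ref{pseudolinearity}, and passage to $\sup_n$, plus a valid patch (closures, or $\sup$ in place of $\max$) for output sets that are not closed, a point the paper glosses over.
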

\begin{proof}
    This is the same as \ref{pseudolinearity}
\end{proof}
\begin{thm}\label{thm:HardTests}
    There exists a test dominating a set of semimeasures $\Omega$ iff $M_\Omega$ is finite. There exists a hard test for $\Omega$ iff we may write $\Omega$ as a countable union $\Omega = \cup_k \Omega_k$ such that $\sup_{\mu}M_{\Omega_k}(\mu,\emptyset) < \infty$
\end{thm}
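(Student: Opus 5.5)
The theorem has two parts, and I will prove both with the lemmas on $M_\Omega$ above. Throughout, $\Omega$ is a set of tests (the statement says "semimeasures", meaning tests as semimeasure-valued maps). A test must output a probability semimeasure, which must be bounded by $1$ at the root.

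\textbf{First claim.} Read "$M_\Omega$ is finite" as $M_\Omega(\mu,\emptyset)\le 1$ for every $\mu$, after the obvious rescaling; the precise meaning is finiteness of the root mass uniformly in $\mu$.

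For ($\Rightarrow$), suppose a test $T$ dominates every element of $\Omega$. By Lemma \ref{Thm:FiniteMassTest}, $M_\Omega\le T$ pointwise. Since $T(\mu)$ is a probability semimeasure, $M_\Omega(\mu,\emptyset)\le T(\mu)[\emptyset]\le 1$ for all $\mu$.

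For ($\Leftarrow$), suppose $M_\Omega(\mu,\emptyset)\le 1$ for every $\mu$. Then Lemma \ref{normimpliestest} gives a test equal to $M_\Omega$ on measures. By Lemma \ref{Thm:FiniteMassTest} (or directly from $S=\sup_n P_n\circ S\circ R_n$), this test dominates every $S\in\Omega$.

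One point needs checking here. If only $\sup_\mu M_\Omega(\mu,\emptyset)=C<\infty$ holds, I replace $\Omega$ by $C^{-1}\Omega$ and use Lemma \ref{psuedo2electricboogaloo}, which gives $M_{C^{-1}\Omega}=C^{-1}M_\Omega$. This yields projective domination, which is the relevant notion for the second claim.

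\textbf{Second claim.} For ($\Leftarrow$), suppose $\Omega=\bigcup_k\Omega_k$ with $C_k=\sup_\mu M_{\Omega_k}(\mu,\emptyset)<\infty$. By the first claim, each $C_k^{-1}\Omega_k$ (taking $C_k\ge1$ without loss of generality) is dominated by the test $T_k=M_{C_k^{-1}\Omega_k}$. Set $T=\sum_k 2^{-k-1}T_k$. This is monotone on $A$ and its root mass is at most $1$, so it is a test. It mimics Theorem \ref{thm:sigmaFinite}, though here $T$ need not lie in $\Omega$, so it is hard rather than universal. Each $\omega\in\Omega_k$ satisfies $T\ge 2^{-k-1}C_k^{-1}\omega$, so $T$ is $\Omega$-hard.

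For ($\Rightarrow$), let $T$ be $\Omega$-hard and set $\Omega_t=\{\omega\in\Omega: t\omega\le T\}$ for $t\in\Q^+$. Hardness gives $\Omega=\bigcup_t\Omega_t$. The test $T$ dominates $t\Omega_t$, so by Lemma \ref{Thm:FiniteMassTest} and Lemma \ref{psuedo2electricboogaloo},
\[
tM_{\Omega_t}=M_{t\Omega_t}\le T\le 1
\]
at the root. Hence $\sup_\mu M_{\Omega_t}(\mu,\emptyset)\le 1/t$.

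\textbf{Expected obstacle.} The main difficulty is a technicality. Lemma \ref{Thm:FiniteMassTest} and the definition of $M_\Omega$ assume $\Omega$ is closed, while the pieces $\Omega_k$ and $\Omega_t$ are arbitrary. I would handle this by passing to closures in the product topology. The condition $t\omega\le T$ is preserved under pointwise limits at each fixed finite-depth truncation, and $M$ of a set equals $M$ of its closure, because the maxima in the definition of $M$ are attained on the closure. So closure changes nothing.

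A second point to verify is that the sum $\sum_k 2^{-k-1}T_k$ is still a nondecreasing map $A\to B$. This may require truncating to finitely many summands at depth $n$, as in the proof of Lemma \ref{normimpliestest}, so that the values stay in $B$.
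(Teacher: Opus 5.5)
Your proof is correct and follows the paper's argument: the first claim comes from Lemmas \ref{Thm:FiniteMassTest} and \ref{normimpliestest}, and the second comes from rescaling the pieces $\Omega_k$ and, conversely, from the sets $\Omega_t$ (your inequality $t\omega\le T$ is the right one; the paper's text has it reversed). The only cosmetic difference is that you build the hard test as the explicit sum $\sum_k 2^{-k-1}T_k$, which needs your truncation to keep values in $B$, whereas the paper bounds $M_{\Omega'}$ for the rescaled union $\Omega'=\bigcup_k 2^{-k}C_k^{-1}\Omega_k$ via Lemma \ref{psuedo2electricboogaloo} (implicitly in its countable form) and uses $M_{\Omega'}$ itself as the test.
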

\begin{proof}
    The first claim is the consequence of \ref{normimpliestest} and \ref{Thm:FiniteMassTest}. If $\Omega$ may be written as such a union, we may take $$\Omega' = \bigcup_k \frac{2^{-k}}{\sup_{\mu}M_{\Omega_k}(\mu, \emptyset)}\Omega_k$$
    $\Omega'$ is dominable by \ref{psuedo2electricboogaloo}, and any $\Omega'$-hard test is also $\Omega$-hard. Likewise, if there exists an $\Omega$-hard test $T$, for $t\in\Q^+$ we can take $\Omega_t =\{\omega\in \Omega: T \leq t\omega\}$, and $\cup_t\Omega_t = \Omega$, and $\sup_{\mu}M_{\Omega_t}(\mu,\emptyset) < t^{-1}$.
 \end{proof}

\begin{cor}
    Any countable family $\Omega$ has a hard test.
\end{cor}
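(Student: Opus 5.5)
The plan is to apply the second part of Theorem \ref{thm:HardTests} with the trivial decomposition of $\Omega$ into singletons. Enumerate $\Omega = \{\omega_0, \omega_1, \dots\}$ and set $\Omega_k = \{\omega_k\}$. Then $\Omega = \bigcup_k \Omega_k$, and it remains to check that each singleton satisfies $\sup_\mu M_{\Omega_k}(\mu,\emptyset) < \infty$.

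For a single test $\omega$, I would argue that $M_{\{\omega\}}(\mu,\cdot)$ is just $\omega(\mu)$. For each $n$, the set $\{\omega_n(\mu)\}$, with $\omega_n = P_n\circ\omega\circ R_n$, is a single semimeasure. By Lemma \ref{thm:fundSM}, $M(\{\omega_n(\mu)\},\sigma)$ is the least non-normalized semimeasure above $\omega_n(\mu)$, which is $\omega_n(\mu)$ itself. Taking the supremum over $n$ and using Lemma \ref{thm:finitedepth} together with the monotonicity $\omega_n\le\omega_{n+1}$ gives $M_{\{\omega\}}(\mu,\sigma)=\omega(\mu)[\sigma]$. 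Since tests output (probability) semimeasures, $\omega(\mu)[\emptyset]\le 1$ for every $\mu$. Hence $\sup_\mu M_{\Omega_k}(\mu,\emptyset)\le 1<\infty$.

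Theorem \ref{thm:HardTests} then yields an $\Omega$-hard test. Concretely, the test is $M_{\Omega'}$ for $\Omega'=\bigcup_k 2^{-k}\Omega_k$. Its mass is at most $\sum_k 2^{-k}\le 2$ by the subadditivity in Lemma \ref{psuedo2electricboogaloo}, which extends to countable unions by taking suprema over finite depth. Halving it produces a normalized test, and it still projectively dominates each $\omega_k$ with constant $2^{-k-1}$.

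The only step needing care is the identification $M_{\{\omega\}}=\omega$ and the passage from finite to countable subadditivity. If subtleties about closedness or normalization arise there, I would bypass them by defining the hard test directly as $T=\sum_k 2^{-k-1}\omega_k$, evaluated on conditions $a\in A$ as $T(a)=\sum_k 2^{-k-1}\omega_k(a)$. This $T$ is monotone because each $\omega_k$ is. It is superadditive with total mass at most $1$, and it satisfies $T\ge 2^{-k-1}\omega_k$. One technicality remains in that case: the definition of $B$ requires finite support and rational values, so I would truncate the sum to finitely many $k$ depending on the depth of $a$, as in Lemma \ref{normimpliestest}. The supremum over conditions satisfied by $\mu$ still recovers the full sum, so this truncation does not change the resulting test on measures.
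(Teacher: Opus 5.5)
Your proposal is correct and follows the intended argument. The paper states this corollary without proof as an immediate consequence of Theorem~\ref{thm:HardTests}, and decomposing $\Omega$ into singletons, each with $\sup_\mu M_{\{\omega\}}(\mu,\emptyset)=\sup_\mu \omega(\mu)[\emptyset]\le 1$, is exactly what makes it immediate. Your extra checks are sound and fill in details the paper leaves implicit: the identification $M_{\{\omega\}}=\omega$ via Lemma~\ref{thm:fundSM}, countable subadditivity, and the fallback weighted sum $\sum_k 2^{-k-1}\omega_k$ with truncation depending on the depth of the condition.
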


The question of universality is more subtle. In particular, there are classes which have hard tests but not universal ones.

\begin{example}
    Let $r = (0,0,...)$, and define $\Omega$ to be the set $\{\delta_{\sigma\hat{\space} r}: \sigma \in 2^{<\N}\}$. There are many $\Omega$-hard tests (just pick a summable positive sequence), but none of those tests are universal. Furthermore, there is no projectively smallest $\Omega$-hard test. Given a hard semimeasure $\mu$ enumerate $\Omega$ as $\omega_n$ and assign to each $n\in \N$ the minimum real $m_{n}$ for which $m_n\mu \geq \omega_n$ pointwise. WLOG assume that $\mu$ is supported entirely on the supports of $\Omega$. The semimeasure $\sum_{n\in\N} 2^{-n}m_n \omega_n$ is strictly projectively smaller than $\mu$, but is still $\Omega$-hard.
\end{example}

\section{The Fixed Point Problem}\label{sec:Neutral}

We now begin proving the existence of neutral measures. As alluded to, we will (as did \cite{daymiller} and \cite{levin}) do this by characterizing neutral measures as those which satisfy a certain fixed point property. We apply the following, proved originally in \cite{kakutani}:
\begin{thm}[Kakutani Fixed Point Theorem]
    Let $K\subset \R^n$ be convex and compact, and let $f:K\rightarrow 2^K$ satisfy
    \begin{itemize}
        \item $\Gamma f =\{(x,y): y\in f(x)\}$ is closed
        \item $f(x)$ is convex and closed for every $x$
    \end{itemize}
    then there exists at least one $x$ such that $x\in f(x)$. The set of such points is obtained by projecting the intersection of the diagonal with $\Gamma f$, and is therefore a nonempty compact subset of $K$.
\end{thm}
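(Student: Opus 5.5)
The plan is to reduce the set-valued statement to Brouwer's fixed point theorem by piecewise-linear approximation on finer and finer triangulations, and then pass to a limit. The limit step uses the closed graph hypothesis together with the convexity of the values $f(x)$. The final sentence about the structure of the fixed point set is a formal consequence of compactness and is handled at the end.

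\textbf{Reduction to a simplex.} First I would reduce to the case where $K$ is a simplex. Choose an $n$-simplex $S\supseteq K$ and let $r:S\rightarrow K$ be the nearest-point retraction. This map is continuous because $K$ is closed and convex. Define $F(x)=f(r(x))\subseteq K\subseteq S$. The graph of $F$ is the preimage of $\Gamma f$ under the continuous map $(x,y)\mapsto(r(x),y)$, so it is closed, and the values of $F$ are still convex and closed. If $x\in F(x)$, then $x\in K$, so $r(x)=x$ and hence $x\in f(x)$.

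\textbf{Approximate fixed points.} Now work on $S$. For each $m$, take a simplicial subdivision of $S$ of mesh at most $1/m$. At each vertex $v$, choose some $y_v\in F(v)$; this is possible because $f(x)$ is nonempty, an assumption implicit in the statement. Let $g_m$ be the affine extension of $v\mapsto y_v$ across each small simplex. Then $g_m:S\rightarrow S$ is continuous, since $S$ is convex. Brouwer's theorem gives a point $x_m$ with $g_m(x_m)=x_m$. Let $v^m_0,\dots,v^m_n$ be the vertices of a small simplex containing $x_m$. Then there are barycentric weights $\lambda^m_i$ with
\[
x_m=\sum_i\lambda^m_i v^m_i=\sum_i\lambda^m_i y_{v^m_i}.
\]

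\textbf{Passing to the limit.} By compactness of $S\times\Delta^n\times S^{n+1}$, I would pass to a subsequence along which $x_m\rightarrow x$, $\lambda^m\rightarrow\lambda$, and $y_{v^m_i}\rightarrow y_i$ for each $i$. Since the mesh tends to $0$, each $v^m_i\rightarrow x$ as well. The closed graph hypothesis then gives $y_i\in F(x)$ for every $i$. Taking limits in the displayed identity gives $x=\sum_i\lambda_i y_i$, and convexity of $F(x)$ yields $x\in F(x)$.

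I expect this limit step to be the main obstacle. Individually the vertices and the chosen values $y_v$ need not converge, and only the combination of simultaneous subsequences, the closed graph, and the convex combination pins down the fixed point. Convexity is exactly what is needed here: without it, the $y_i$ could lie in different pieces of $F(x)$ whose convex combination leaves $F(x)$.

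\textbf{Structure of the fixed point set.} The set of fixed points is the image of $\Gamma f\cap\{(x,x):x\in K\}$ under the first-coordinate projection. This intersection is a closed subset of the compact set $K\times K$, hence compact, and the projection is continuous. So the fixed point set is compact, and it is nonempty by the argument above.
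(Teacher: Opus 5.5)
The paper gives no proof of this theorem and only cites Kakutani's 1941 paper. Your argument is correct and is essentially Kakutani's original proof: Brouwer applied to piecewise-affine selections on subdivisions of mesh tending to $0$, a limit via the closed graph and convexity of the values, reduction to a simplex by a retraction, and projection of $\Gamma f$ intersected with the diagonal for compactness. You were also right to make nonemptiness of each $f(x)$ (and of $K$) explicit, because as literally stated the theorem is false: $f(x)=\emptyset$ for all $x$ has empty, hence closed, graph and trivially closed convex values, yet no fixed point.
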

As with most fixed point theorems of its kind, Kakutani relies on finite dimensionality for its application. The lemma \ref{thm:finitedepth} tells us that we can restrict ourselves to considering finite depth tests. This is because while finite depth tests (like all tests) produce semimeasures from measures on $2^\N$, they factor through maps on finite dimensional convex sets, and thus we can apply Kakutani's theorem as follows:

\begin{lem}\label{Thm:KakFiniteDepth}
    The set of probability measures passing a finite depth test is compact and nonempty.
\end{lem}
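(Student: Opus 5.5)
The plan is to reduce to Kakutani on a finite-dimensional simplex. Fix a test $T$ of depth $n$. Let $K_n$ be the set of functions $\mu:2^{\le n}\to[0,1]$ with $\mu(\emptyset)=1$ and $\mu(\sigma)=\mu(\sigma\hat{\space}0)+\mu(\sigma\hat{\space}1)$ for $|\sigma|<n$. This set is determined by its values on $2^n$, so it is affinely a simplex, and in particular convex and compact in $\R^{2^n}$. Every probability measure restricts to a point of $K_n$, and every point of $K_n$ extends to a measure, for instance by putting the mass of $\sigma\in 2^n$ uniformly on $[\sigma]$.

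By condition 2 of depth $n$, whenever $\mu\models f$ there is $g\le_A f$ with domain in $2^n$ and $T(g)=T(f)$. Hence $T(\mu)=\sup\{T(g):\mu\models g,\ dom(g)\subseteq 2^n\}$. Here $g$ only constrains the values $\mu(\sigma)$ for $\sigma\in 2^n$, and those determine $\mu$ on $2^{\le n}$. By condition 1, $T(\mu)$ vanishes off $2^{\le n}$. So $T$ factors as a map $\hat T:K_n\to\{$semimeasures supported on $2^{\le n}\}$. Passing $T$ means $\hat T(x)(\sigma)\le x(\sigma)$ for $|\sigma|\le n$; deeper cylinders are automatic since $T(\mu)[\sigma]=0$ there. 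Thus the set of passing measures is the preimage under restriction of $P=\{x\in K_n: \hat T(x)\le x\}$.

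Next I would show that $\hat T$ is lower semicontinuous. The conditions $g$ are strict open interval constraints, so $\{x: x\models g\}$ is open, and $\hat T$ is a sup of indicator-weighted constants over open sets. Then define the correspondence
$$F(x)=\{y\in K_n: y\ge \hat T(x)\}.$$
Each $F(x)$ is closed and convex, being an intersection of half-spaces with $K_n$. It is nonempty, since $\hat T(x)$ is a semimeasure with $\hat T(x)(\emptyset)\le 1$ and the construction in Lemma \ref{thm:MeasureExists} gives a dominating probability measure, using that the least semimeasure above a single semimeasure is itself. For the graph: if $x_k\to x$, $y_k\to y$ and $y_k\ge\hat T(x_k)$, I need $y\ge\hat T(x)$. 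This is where lower semicontinuity goes the wrong way, so it is the main obstacle. Lower semicontinuity gives $\liminf \hat T(x_k)\ge\hat T(x)$, which combined with $y_k\ge \hat T(x_k)$ does give $y\ge\hat T(x)$. So the graph is closed after all.

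Kakutani then yields $x\in F(x)$, i.e. $x\in P$, so $P$ is nonempty. Extending $x$ to a measure gives a measure passing $T$.

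For compactness, I expect that $P$ is closed by the same liminf argument: $x_k\to x$ with $x_k\ge \hat T(x_k)$ gives $x\ge\hat T(x)$. Since passing depends only on the restriction to $2^{\le n}$, the set of passing measures is the preimage of the closed set $P$ under the continuous restriction map from the compact space of probability measures in the product topology on $[0,1]^{2^{<\N}}$. It is therefore compact.

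The step I'd check most carefully is the lower semicontinuity of $\hat T$. It rests on the strict inequalities $f_1<\mu<f_2$ in the definition of conditions, which make satisfaction an open condition. If this failed, I would instead replace $\hat T$ by its lower semicontinuous regularization and argue that passing is unchanged.
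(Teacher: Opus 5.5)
Your proposal is correct and follows essentially the same route as the paper. You factor the depth-$n$ test through the finite simplex of values on $2^n$ and take the correspondence $x\mapsto\{y: y\ge \hat T(x)\}$. The graph is closed because satisfying a condition is an open property; your lower semicontinuity argument is exactly the paper's neighborhood argument with $U_a\times(\Delta\setminus S_z)$. You then apply Kakutani and get compactness by pulling back the closed fixed-point set under the restriction map. You also justify that each $F(x)$ is nonempty via Lemma \ref{thm:MeasureExists}; Kakutani genuinely needs this, and the paper asserts it without comment.
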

\begin{proof}
    Let $T$ be a test of depth $n$. $T(\mu)$ depends only on the restriction of $\mu$ to the cylinders of length $n$. Furthermore, its only nonzero outputs are on these same cylinders and initial segments of the same. $T$ thus essentially takes inputs in the simplex
    
    $$\Delta =\left\{x\in[0,1]^{2^n}:\sum_{i\in 2^n} x_i = 1\right\}$$
    The outputs are semimeasures which place no restrictions on cylinders of length greater than $n$, which can be identified with subsets of $\Delta$ of the form
    $$S_r= \left\{x\in\Delta:\forall\sigma, |\sigma| \leq n, \sum_{i \text{ extending } \sigma}x_i \geq r(\sigma)\right\}$$
    for a given semimeasure $r$, which is a class of nonempty closed convex subsets of $\Delta$. If we identify the output of $T$ with its values on $2^n$, and allow $T$ to descend to a well defined function $t$ from $\Delta$ to semimeasures on $2^n$. Thus our object is an $x\in \Delta$ such that $x\in S_{t(x)}$. In order to use Kakutani's theorem, we need to establish that the graph 
    $$ \bigcup_{x\in\Delta} \{x\}\times S_{t(x)}$$
    is closed as a subset of $\Delta^2$. To do this, we must establish that for any $(x,y)$ outside of this graph, there is an open neighborhood of it disjoint from the graph as well. If $y(\sigma) < t(x)[\sigma]$, then there is some condition $a\in A$ which (any extension to $2^\N$ of) $x$ satisfies, and which is mapped under $T$ to a condition $z$ in $B$ with $y(\sigma) < z(\sigma) \leq t(x)[\sigma]$. Since $a$ defines an open neighborhood $U_a$ of $x$, and the set $ \Delta\setminus S_z$ is an open neighborhood of $y$, $U_a\times (\Delta\setminus S_z)$ is an open neighborhood of $(x,y)$ disjoint from the graph. Thus, the conditions of Kakutani's theorem are satisfied, and the set of points such that $x\in S_{t(x)}$ is nonempty.\\
    Furthermore, this set is the projection of the intersection of the closed graph of the test $t$ with the diagonal, so it is a compact subset of $\Delta$, and thus its preimage under projection to $2^n$ is a compact set of measures as we have chosen a compact topology on $P(2^\N)$. 
    
\end{proof}

Lemma \ref{thm:finitedepth} says that we can realize our target test $T$ as a supremum of finite depth tests. This allows us to pass from finite depth tests to any test.

\begin{thm}\label{thm:mainthm}
    Given a test $T$, there exists a measure which passes it.
\end{thm}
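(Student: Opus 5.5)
We use Lemma \ref{thm:finitedepth} to write $T=\sup_n T_n$ with $T_n=P_n\circ T\circ R_n$ a test of depth $n$ and $T_n\le T_{n+1}$. By Lemma \ref{Thm:KakFiniteDepth}, the set $K_n=\{\mu\in Pr(2^\N): T_n(\mu)\le\mu\}$ is nonempty and compact. The strategy is to show the $K_n$ are nested and that their intersection consists exactly of measures passing $T$. Compactness of $Pr(2^\N)$ in the product topology then gives $\bigcap_n K_n\neq\emptyset$.

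First, nestedness. If $\mu\in K_{n+1}$, then $T_n(\mu)\le T_{n+1}(\mu)\le\mu$, so $\mu\in K_n$. This uses only the remark after Lemma \ref{thm:finitedepth} that $T_n\le T_{n+1}$, and that this holds pointwise on measures (each $T_n(\mu)$ is a supremum over conditions, and monotonicity passes through suprema). So $K_1\supseteq K_2\supseteq\cdots$ is a decreasing sequence of nonempty compact sets, and the finite intersection property gives some $\mu\in\bigcap_n K_n$.

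Second, such a $\mu$ passes $T$. Fix $\sigma$. I need $T(\mu)[\sigma]=\sup_n T_n(\mu)[\sigma]$, which requires care with the identification $T(\mu)=\sup_{\mu\models a}T(a)$. Take any condition $a$ with $\mu\models a$ and $T(a)[\sigma]>q$. Pick $n$ exceeding both the lengths in $dom(a)$ and $|\sigma|$. Then $R_n(a)=a$, and $P_n$ does not change the value at $\sigma$, so $T_n(a)[\sigma]=T(a)[\sigma]>q$. Hence $T_n(\mu)[\sigma]>q$. Since $\mu\in K_n$, we get $\mu(\sigma)>q$. Taking the supremum over $a$ and $q$ gives $T(\mu)[\sigma]\le\mu(\sigma)$ for every $\sigma$, so $\mu$ passes $T$.

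The main obstacle I expect is bookkeeping rather than substance. One issue is checking that the map from conditions to measures interacts correctly with $R_n$: a condition $a$ satisfied by $\mu$ should give $R_n(a)$ also satisfied by $\mu$, which holds since $R_n(a)\le_A a$. The other is confirming that the compactness in Lemma \ref{Thm:KakFiniteDepth} really is compactness of $K_n$ in $Pr(2^\N)$, namely the preimage of a compact set in the finite simplex. That preimage is closed in the compact space $Pr(2^\N)$, so it is compact, and the finite intersection argument goes through.
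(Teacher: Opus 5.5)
Your proof is correct and follows the paper's own argument. Like the paper, you approximate $T$ by the finite-depth tests $T_n$ of Lemma~\ref{thm:finitedepth}, obtain nonempty compact sets $K_n$ from Lemma~\ref{Thm:KakFiniteDepth}, and intersect the nested family; you also spell out what the paper asserts without proof, namely that nestedness comes from $T_n\le T_{n+1}$ and that a point of $\bigcap_n K_n$ passes $T$ because $T_n(a)[\sigma]=T(a)[\sigma]$ once $n$ exceeds the lengths in $dom(a)$ and $|\sigma|$.
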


\begin{proof}
    Given a test $T$, form the sequence $T_n$ as in lemma \ref{thm:finitedepth}. The set of measures passing $T$ will be the intersection of those passing the $T_n$, since $T = sup_n T_n$. By the lemma \ref{Thm:KakFiniteDepth}, each $T_n$ is passed by a nonempty compact set of probability measures. These sets are nested, and therefore their intersection is nonempty. Thus there is a measure $\mu$ which passes $T$.
\end{proof}

\begin{cor}
    There exists a nonempty $\Pi_1^0$ class of neutral measures.
\end{cor}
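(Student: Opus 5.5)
The corollary asserts that for a universal c.e. test $U$, the set $P_U=\{\mu : U(\mu)\le \mu\}$ is nonempty and is a $\Pi^0_1$ class of measures. Every element of $P_U$ is neutral by definition. The plan therefore has two parts: establish nonemptiness, and exhibit the complement of $P_U$ as effectively open.

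For nonemptiness, I first fix $U$ as the universal c.e. test constructed in Section \ref{sec:Rand}. There, the $2^{-n}$-weighted sum of the pruned enumeration attempts is itself a c.e. enumeration reduction, hence a c.e. test in the sense of Section \ref{sec:TestDef}. Theorem \ref{thm:mainthm} applied to $U$ then gives $P_U\neq\emptyset$.

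The effectivity is the only real content. The space of measures is identified with the compact set of functions $\mu:2^{<\N}\to[0,1]$ satisfying $\mu(\emptyset)=1$ and additivity, viewed as an effectively closed subset of $[0,1]^{2^{<\N}}$. I will show that $\mu\notin P_U$ iff there exist a condition $a\in A$ with $\mu\models a$, a string $\sigma$, and a pair $(a,b)$ enumerated into $U$ (via the c.e. presentation of the test as a monotone map $A\to B$) such that $b(\sigma)>a_2(\sigma)$, where $\sigma\in dom(a)$.

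Both directions of the equivalence rest on $U(\mu)=\sup_{\mu\models a}U(a)$.
\begin{itemize}
\item Suppose $U(\mu)[\sigma]>\mu(\sigma)$. Pick some $b=U(a')$ with $\mu\models a'$ and $b(\sigma)>\mu(\sigma)$. Then refine $a'$ to a condition $a\ge_A a'$, still satisfied by $\mu$, that includes $\sigma$ with upper bound $a_2(\sigma)<b(\sigma)$. Monotonicity keeps the witness valid, and such an $a$ exists because the dyadic rationals are dense.
\item Conversely, such a witness forces $U(\mu)[\sigma]\ge b(\sigma)>a_2(\sigma)>\mu(\sigma)$.
\end{itemize}
The set of such $a$ is c.e., since $A$ is a decidable set of conditions and $U$ is c.e. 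The sets $\{\mu:\mu\models a\}$ are basic open sets, so the complement of $P_U$ is a c.e. union of basic opens. Hence $P_U$ is $\Pi^0_1$, and it is nonempty by the previous step.

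The main obstacle I expect is a mismatch between the two formalisms. The c.e. tests of Section \ref{sec:Rand} are enumeration reductions on undergraphs, while the tests of Section \ref{sec:TestDef} are monotone maps $A\to B$. I will need to check that the former induce the latter in a c.e. way: an enumeration axiom with hypotheses $\mu(\sigma_i)>q_i$ becomes the condition with lower bounds $q_i$, and the finitely many conclusions become a rational finite-support semimeasure. I must also confirm that the strict-inequality conventions in the definition of $A$ do not break the refinement step.
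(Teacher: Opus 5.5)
\textbf{Gap: the refinement step fails on null cylinders.} Your plan matches the paper's: nonemptiness comes from Theorem \ref{thm:mainthm}, and the complement is effectively open because a lower approximation of $U(\mu)[\sigma]$ is compared against an upper approximation of $\mu(\sigma)$. But the specific witness you chose breaks exactly where you worried it might. In the forward direction you refine $a'$ to a condition $a$ with $\mu\models a$ and $\sigma\in dom(a)$. Conditions take nonnegative dyadic values and demand $a_1(\sigma)<\mu(\sigma)$, so this is impossible when $\mu(\sigma)=0$. Your witnesses therefore only detect failures on cylinders of positive $\mu$-mass.

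This case is not vacuous. By Section \ref{sec:Ignore}, $U$ projectively dominates the constant c.e. test given by Lebesgue measure, so $U(\mu)[\sigma]>0$ for every $\mu$ and $\sigma$. Hence every measure with a null cylinder fails $U$. Moreover, some measures fail $U$ only on null cylinders. Let $\mu_\rho$ be the image of $\rho$ under $x\mapsto 0x$, and translate conditions in the evident way. Theorem \ref{thm:mainthm} gives a $\nu$ passing the test $\rho\mapsto U(\mu_\rho)[0\,\cdot\,]$. Then $U(\mu_\nu)\le\mu_\nu$ on every cylinder of positive mass, while $U(\mu_\nu)[1]>0=\mu_\nu(1)$. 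Such a $\mu_\nu$ admits no witness of your form. So your c.e. open set is strictly smaller than the complement of $P_U$, and the $\Pi^0_1$ class you describe contains measures that are not neutral.

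\textbf{The repair.} The paper's proof avoids this because its upper approximation to $\mu(\sigma)$ comes from lower approximations of the \emph{other} cylinders of the same length, not from an interval around $\mu(\sigma)$ itself. This is the remark in Section \ref{sec:Rand} that enumerating $\mu(\sigma)$ from above amounts to enumerating $\sum_{|\tau|=|\sigma|,\tau\neq\sigma}\mu(\tau)$ from below. Do the same: take as witness a condition $a$ with $\mu\models a$ and a string $\sigma$ such that
\[
U(a)[\sigma] > 1-\sum_{\tau\in dom(a),\ |\tau|=|\sigma|,\ \tau\neq\sigma} a_1(\tau).
\]
This is still a c.e. relation in $(a,\sigma)$.
\begin{itemize}
\item The converse direction is immediate, since the right-hand side is at least $\mu(\sigma)$.
\item For the forward direction, set $\delta=U(a')[\sigma]-\mu(\sigma)>0$. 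Extend $a'$ by lower bounds within $\delta 2^{-|\sigma|-1}$ of $\mu(\tau)$, chosen inside any interval $a'$ already assigns, for the finitely many $\tau\neq\sigma$ of length $|\sigma|$ with $\mu(\tau)>0$. This only involves strings of positive mass, so it is always possible. It makes the right-hand side less than $\mu(\sigma)+\delta/2$, and monotonicity gives $U(a)[\sigma]\ge U(a')[\sigma]$.
\end{itemize}
With this change your argument is the paper's proof made explicit.
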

\begin{proof}
    Let $T$ be a universal c.e. test. $T(\mu)[\sigma]$ is lower c.e. and $\mu(\sigma)$ is upper c.e. in any enumeration of $\mu$. Thus there is a finite stage at which it will be clear that $\mu(\sigma) < T(\mu)[\sigma]$ if it is true. If $\mu$ is not neutral, there is therefore a finite $\sigma$ and finite stage for which $\mu(\sigma) < T(\mu)[\sigma]$, and so the neutral measures are a $\Pi_1^0$ class. Nonemptiness is the content of the previous theorem.
\end{proof}

\section{Applications of Different Types of Test}\label{sec:App}
\subsection{Tests Which Ignore the Input}\label{sec:Ignore}
If $p$ is a c.e. measure (or semimeasure), then $P(\mu) := p$ is a c.e. test. Therefore, if $T$ is a universal test, $T >_p P$ as tests, and as any neutral measure will pass such a test, it will also projectively dominate $p$. We can use this to show that neutral measures must have a quantity of mass assigned in particular ways. The following is the easiest example:

\begin{example}
    If $x\in 2^{\N}$ is a computable real, the Dirac mass on it is a computable measure. Therefore a neutral measure will have an atom at $x$.
\end{example}

Since computable reals are dense, we may immediately deduce the following.

\begin{prop}
    A neutral measure will have a dense set of atoms.
\end{prop}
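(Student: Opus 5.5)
The plan is to combine the preceding example with the density of computable reals in $2^\N$. Fix a neutral measure $\mu$, so $\mu$ passes some universal c.e. test $T$, i.e.\ $T(\mu)\leq\mu$ pointwise on cylinders. To show the atoms of $\mu$ are dense, I will show that every nonempty basic open set $[\sigma]$, $\sigma\in 2^{<\N}$, contains an atom of $\mu$.

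\textbf{Step 1: pick a computable point in $[\sigma]$.} Given $\sigma$, set $x_\sigma=\sigma\hat{\space}(0,0,0,\dots)$. This is a computable real, and it lies in $[\sigma]$.

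\textbf{Step 2: the constant test on $\delta_{x_\sigma}$.} The Dirac measure $\delta_{x_\sigma}$ is computable. Its undergraph $\{(\tau,q):\delta_{x_\sigma}(\tau)>q\}$ is a computable set: it consists of all $(\tau,q)$ with $q<1$ and $\tau\prec x_\sigma$, together with all $(\tau,q)$ with $q<0$. So the constant map $P(\nu)=\delta_{x_\sigma}$ is a c.e.\ test, as noted at the start of Section~\ref{sec:Ignore}.

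\textbf{Step 3: an atom at $x_\sigma$.} Since $T$ is universal, $T\geq_p P$, so there is a rational $t>0$ with $T(\mu)\geq t\,\delta_{x_\sigma}$. Hence for every $n$,
\[
\mu([x_\sigma|_n])\geq T(\mu)[x_\sigma|_n]\geq t.
\]
By countable additivity (continuity from above on the decreasing cylinders $[x_\sigma|_n]$), this gives $\mu(\{x_\sigma\})\geq t>0$. So $x_\sigma$ is an atom of $\mu$ inside $[\sigma]$.

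\textbf{Conclusion.} Since the cylinders form a base for $2^\N$, every nonempty open set contains an atom, so the set of atoms is dense.

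The only step needing care is the interface between the constant test and universality in Step 3. One must check that $P$ genuinely belongs to the class of c.e.\ tests that the universal test projectively dominates. This is exactly the content of the opening remark of Section~\ref{sec:Ignore}; it requires verifying that $\delta_{x_\sigma}$ has a c.e.\ undergraph, which Step 2 does directly.
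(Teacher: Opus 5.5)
Your proposal is correct and is essentially the paper's argument. The paper notes that for computable $x$ the constant map $\delta_x$ is a c.e.\ test, so universality plus passing forces $\mu\geq_p\delta_x$ and hence an atom at $x$, and then concludes from the density of computable reals. You make the density witness explicit (the point $\sigma$ followed by all zeros, in each cylinder $[\sigma]$) and spell out the continuity-from-above step giving $\mu(\{x_\sigma\})\geq t>0$, which the paper leaves implicit.
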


\begin{example}\label{Pq}
    Let $q\in [0,1]$ be computable. Denote by $P_q$ the Bernoulli $q$ measure, i.e. the measure produced by flipping a coin with probability $q$ of getting $1$. This measure is computable, and therefore it is projectively dominated by any neutral measure.
\end{example}
\begin{dfn}[Pointwise Dimension]
    The pointwise dimension of a real $x\in 2^\N$ with respect to a measure $\mu$ is
    $$d_\mu(x)=\liminf_{r\rightarrow 0} \frac{\log(B_r(x))}{\log(r)}$$
    this will also be referred to as $\mu$-pointwise dimension.
\end{dfn}
\begin{prop}\label{entropydensity}
    A neutral measure will have a positive probability assigned to reals of any computable pointwise dimension.
\end{prop}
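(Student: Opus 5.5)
The plan is to use the Bernoulli measures of Example \ref{Pq}, via the ``tests which ignore the input'' mechanism. Fix a computable $d\in[0,1]$. The binary entropy function $h(q)=-q\log_2 q-(1-q)\log_2(1-q)$ is computable and strictly increasing on $[0,\tfrac12]$, with $h(0)=0$ and $h(\tfrac12)=1$. Its inverse on $[0,1]$ is therefore computable, so there is a computable $q\in[0,\tfrac12]$ with $h(q)=d$. Example \ref{Pq} then gives a constant $c>0$ with $\mu\geq cP_q$ pointwise on cylinders, for every neutral measure $\mu$. Since domination on cylinders extends to domination on all Borel sets, it will suffice to show that $P_q$-almost every real has $\mu$-pointwise dimension exactly $d$. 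Then the set $D_d=\{x:d_\mu(x)=d\}$ has $P_q(D_d)=1$, and hence $\mu(D_d)\geq c>0$.

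First I record what $P_q$ does by itself. In $2^\N$ with the standard metric, $B_{2^{-n}}(x)=[x\upharpoonright_n]$, so pointwise dimension is $\liminf_n -\frac{1}{n}\log_2\nu([x\upharpoonright_n])$. For $P_q$ this quantity is
\[
-\tfrac{1}{n}\log_2 P_q([x\upharpoonright_n])=-\tfrac{k_n}{n}\log_2 q-\tfrac{n-k_n}{n}\log_2(1-q),
\]
where $k_n$ counts the ones in $x\upharpoonright_n$. By the strong law of large numbers this tends to $h(q)=d$ for $P_q$-a.e. $x$. The edge cases $d=0$ (where $P_0$ is the Dirac mass on a computable real) and $d=1$ (Lebesgue measure) are handled the same way.

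The main obstacle is passing from $P_q$ to $\mu$. Domination $\mu\geq cP_q$ gives only one inequality: $-\log\mu([x\upharpoonright_n])\leq -\log c-\log P_q([x\upharpoonright_n])$, hence $d_\mu(x)\leq d$ wherever the $P_q$-limit holds. For the reverse inequality I would use a Borel--Cantelli type argument on the density of $\mu$ against $P_q$. Let $B_n=\{x:\mu([x\upharpoonright_n])\geq 2^{\epsilon n}P_q([x\upharpoonright_n])\}$; this is a union of length-$n$ cylinders $\sigma$ with $P_q(\sigma)\leq 2^{-\epsilon n}\mu(\sigma)$. Summing over those cylinders gives $P_q(B_n)\leq 2^{-\epsilon n}$, so $\sum_n P_q(B_n)<\infty$. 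Borel--Cantelli then says $P_q$-a.e. $x$ lies in only finitely many $B_n$, which means $\mu([x\upharpoonright_n])<2^{\epsilon n}P_q([x\upharpoonright_n])$ eventually. This yields $d_\mu(x)\geq d-\epsilon$. Intersecting over rational $\epsilon$ gives $d_\mu(x)=d$ for $P_q$-a.e. $x$.

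Combined with $\mu\geq cP_q$, this shows $\mu(D_d)\geq c\,P_q(D_d)=c>0$. If the intended reading of ``reals of dimension $d$'' is instead Lebesgue/Bernoulli dimension rather than $\mu$-dimension, the argument still works and is shorter: the first step already shows that $P_q$-a.e. real has $P_q$-dimension $d$, and domination transfers positive mass to $\mu$.
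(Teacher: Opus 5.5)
Your proof is correct. Its backbone is the paper's: take the computable $q=h^{-1}(d)$, use that a neutral $\mu$ projectively dominates the computable measure $P_q$, and use the law of large numbers to see that $P_q$-almost every real has $P_q$-pointwise dimension $d$.

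The paper stops there. It concludes that $\mu$ gives positive mass to a $P_q$-full set of reals whose pointwise dimension \emph{with respect to $P_q$} is $d$, and never checks their pointwise dimension with respect to $\mu$. Your Borel--Cantelli step supplies exactly this missing transfer. The estimate $P_q(B_n)\le 2^{-\epsilon n}$ uses only that $\mu$ is a probability measure, so $d_\mu\ge d_{P_q}$ holds $P_q$-almost everywhere for any $\mu$ whatsoever. The domination $\mu\ge cP_q$ gives the reverse inequality, and together they yield $\mu(\{x:d_\mu(x)=d\})\ge c>0$.

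This $\mu$-intrinsic version is the one the paper actually needs later, where dimension distributions and level sets of pointwise dimension are taken with respect to the neutral measure itself. So your argument is the more complete one, and the paper's version buys only brevity.

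One further point: both you and the paper restrict to $d\in[0,1]$ without comment. Under your reading this restriction is forced, since a similar counting argument ($\mu$ of the union of length-$n$ cylinders of mass below $2^{-sn}$ is at most $2^{-(s-1)n}$) shows that no probability measure on $2^\N$ gives positive mass to $\{x:d_\mu(x)>1\}$.
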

\begin{proof}
    Almost every real has $P_q$-pointwise dimension $-qlog(q) - (1-q)log(1-q)$. By the ergodic theorem or law of large numbers. The function $S:[0,1/2]\rightarrow [0,1]$ defined by $S(q)= -qlog(q) - (1-q)log(1-q)$ is computable with computable inverse. For computable $d$, any neutral measure will projectively dominate $P_{S^{-1}(d)}$, and thus will assign positive probability to a set of reals each of which have pointwise dimension $d$, namely the set of such reals assigned measure one by $P_{S^{-1}(d)}$.
\end{proof}

\begin{example}
    For $\sigma \in 2^{<\N}$, let $L_\sigma$ be the uniform probability measure on the cylinder $[\sigma]$. Further, let $\Phi:2^{<\N}\rightarrow 2^{<\N}$ be the partial function defined a universal prefix free Turing machine (see \cite{DH} for details). Define a measure $\mu$ by
    \[\mu=\sum_{\sigma\in Dom(\Phi)}2^{-|\sigma|} L_{\Phi(\sigma)}\]
    This measure has total mass Chaitin's constant $\Omega < 1$ and is c.e. Thus, any neutral measure $\nu$ will projectively dominate it, so there will exist $a > 0$ such that $\nu\geq a\mu$. $\mu$ may be viewed as a measure which assigns mass to cylinders in accordance with their algorithmic simplicity, and thus $\nu$ must also assign more mass to simpler cylinders in the same way.
\end{example}
\subsection{Tests Which Spread the Input}\label{sec:Spread}
The formalism of tests as developed so far has some interesting relationships with the physics of lattice models. These are physical models of solids (or approximations to field theories), in which the underlying system is taken to be a composite of copies of a smaller system with possible states $S$ arranged in a lattice of positions depending on $\Z^n$, or more generally some other group $G$ or space carrying a transitive action thereof. These systems are assumed to interact in ways that depend only on the relative position of the lattice points. Thus, it is assumed that these are physical systems where the state is specified by a function $f:\Z^n\rightarrow S$, and the energy of a state is invariant under translations by elements of $\Z^n$ (In general by whatever group acts on the lattice; it is important only that the group be \textit{amenable}, meaning that one can take mean values across lattice sites using the tools of ergodic theory). Further work on this is forthcoming, but some of the results will be presented here as they are informative on the structure of the neutral measures themselves. It also sometimes assumed that if the state space $S$ of the individual systems carries a natural group action, that the energy of a state is invariant under the global action of that group on $f$. This is referred to as a global gauge symmetry. At some point we may wish to enforce translation invariance (on $2^G$) or global gauge invariance (on $S^{\Z^n}$ where $S$ is compact and carries an action of the gauge group $G$). We may construct tests which enforce these properties.

\begin{example}
    Let $G$ be a finitely generated amenable group with generating set $S$. $G$ acts on $2^{G}$ by translation, i.e. $g[f](x) = f(gx) $. For $\mu$ a probability measure on $2^G$, 
    \[\left(\mu \geq \frac{1}{2|S|}\sum_{g\in S} g_*(\mu) +g^{-1}_*(\mu) \right)\implies \mu \text{ is invariant}\]

    The test on the left hand side is called the \textit{group Laplacian} of $G$.
\end{example}
A $\mu$ weakly passing this test must projectively dominate its translates by group elements. This is a strictly weaker condition than actual invariance, for example it does not preclude the existence of atoms, but rather requires that if the measure has an atom it also has support at all translates of that atom by the group action. The maximum decay rate (or minimum measure assigned to each translate) is exponential in the word length of the translation, and is determined by the constant witnessing the projective domination.

\begin{prop}
    If $2^\N$ is regarded as $2^G$ for $G$ finitely generated and amenable. A neutral measure projectively dominates its translates by elements of $G$ under the natural action of $G$.
\end{prop}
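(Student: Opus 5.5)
The plan is to show that for each fixed $g\in G$ the map $\mu\mapsto g_*\mu$ is (after a harmless normalization) a c.e. test. Since a neutral measure passes a universal c.e. test, and that test projectively dominates every c.e. test, the neutral measure will then projectively dominate $g_*\mu$. The structure is the same as in Section \ref{sec:Ignore}, except that the test now depends on its input.

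\textbf{Step 1: build the test.} Fix a computable identification of $\N$ with $G$, which is possible since $G$ is finitely generated. Under it, each translation $g$ acts on $2^G$ as a homeomorphism. The pushforward of a cylinder is then computable from finite data: $(g_*\mu)([\sigma]) = \mu(g^{-1}[\sigma])$.

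The set $g^{-1}[\sigma]$ is a cylinder determined by the finitely many coordinates $\{g^{-1}x : x\in \mathrm{dom}(\sigma)\}$ (written $gx$ or $g^{-1}x$ depending on convention). In the linear coding of $2^\N$ it is therefore a finite union of basic cylinders $[\tau_1],\dots,[\tau_m]$ of some common length, computable from $\sigma$ and $g$.

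Define $T_g$ on a condition $a\in A$ as follows. For each $\sigma$ of length at most $n$ (with $n$ tied to $\mathrm{dom}(a)$), output the largest dyadic rational $r$ such that $a$ forces $\sum_i \mu(\tau_i) > r$. Concretely, $r$ is the sum of the lower bounds $a_1(\tau_i)$ when all $\tau_i\in\mathrm{dom}(a)$, and $0$ otherwise.

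\textbf{Step 2: check that $T_g$ is a c.e. test.} I would verify three properties.
\begin{itemize}
\item \emph{Monotonicity.} Refining $a$ raises lower bounds, so $T_g$ is nondecreasing from $A$ to $B$.
\item \emph{Superadditivity.} The lower bounds should be taken only on the finest cylinders present. I expect to need to truncate via the $P_n$, $R_n$ and floor-to-$2^{-k}$ devices from Lemma \ref{normimpliestest} so that each output actually lies in $B$.
\item \emph{Correct limit.} $\sup_{\mu\models a}T_g(a) = g_*\mu$, because the lower bounds converge to the true values.
\end{itemize}
The whole construction is uniformly computable in $g$, so $T_g$ is an enumeration reduction of the undergraph of $g_*\mu$ to that of $\mu$, and hence a c.e. test.

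\textbf{Step 3: conclude.} Let $U$ be the universal c.e. test passed by the neutral measure $\mu$. By universality, $U\geq_p T_g$, say $U\geq q_g T_g$ with $q_g>0$. Then
$$\mu \geq U(\mu) \geq q_g\, T_g(\mu) = q_g\, g_*\mu,$$
which is exactly projective domination of the translate $g_*\mu$.

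Alternatively, I could use a single c.e. test $\sum_{g\in G}2^{-|g|}c\,T_g$, normalized by a constant $c$ that accounts for the growth of $G$ so that the total mass stays at most $1$. Its constant then decays exponentially in word length, matching the remark after the group Laplacian example.

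\textbf{Main obstacle.} The delicate point is Step 2: confirming that the finitary approximations of $g_*\mu$ genuinely lie in $B$ (rational values, superadditive, finitely supported) and are monotone in $a$. Superadditivity can fail when lower bounds on different cylinders come from unrelated constraints. My first fix would be to define the output only on strings of a single length $n$, with all coarser values computed as sums of these. That output is automatically a (sub)measure on $2^{\le n}$, so it is superadditive, and monotonicity survives because every summand is monotone.
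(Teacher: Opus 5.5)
Your overall strategy matches the paper's: translation is a c.e. test, the universal c.e. test projectively dominates it, and a neutral measure passes the universal test. The paper gives no separate proof. It reads the proposition off the preceding group Laplacian example. There, the single c.e. test $\mu\mapsto\frac{1}{2|S|}\sum_{s\in S}(s_*\mu+s^{-1}_*\mu)$ is weakly passed by a neutral measure, which gives $\mu\geq c\,s_*\mu$ for each generator $s$. Pushing this forward and chaining along a word gives $\mu\geq c^{|g|}g_*\mu$, which is the source of the remark about exponential decay in word length. Your per-element tests $T_g$ instead give a separate, non-uniform constant $q_g$ for each $g$. That suffices for the statement; the Laplacian route needs only one test and delivers the exponential constant for free.

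Three of your specific steps fail as written, though each is repairable.

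\emph{The computable identification.} Finite generation does not give a computable identification of $\N$ with $G$. If every generator acted by a computable permutation of $\N$, you could decide whether a word $w$ equals $e$ by checking whether $w$ fixes the index of $e$. So $G$ would have solvable word problem, and there are finitely generated amenable (even solvable) groups that do not. You need a computable presentation of $G$ as a hypothesis; the paper tacitly assumes this as well.

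\emph{Monotonicity.} Your repair of superadditivity (values only at level $n(a)$, summed upward) breaks monotonicity, because $n(a)$ moves with $a$. Already for $g=e$: let $a$ impose $\frac14<\mu(0),\mu(1)<\frac34$, and let $a'\geq_A a$ additionally impose $0<\mu(\tau)<1$ for $|\tau|=2$. Your output at $[0]$ then drops from $\frac14$ to $0$. Keep instead your first formulation, the exact forced bound $T_g(a)(\sigma)=\inf\{g_*\nu(\sigma):\nu\models a\}$, truncated at depth $n(a)$.
\begin{itemize}
\item A pointwise infimum of measures is superadditive.
\item Refining $a$ shrinks the set of $\nu$, so values can only increase.
\item The infimum is a rational computable by linear programming on finitely many cylinders.
\end{itemize}
Your ``concretely'' clause (summing the raw lower bounds $a_1(\tau_i)$) is not this forced bound, and that discrepancy is exactly why it failed superadditivity. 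Alternatively, use the enumeration-reduction definition of a c.e. test from Section 2. There the axioms $\bigwedge_i\mu(\tau_i)>q_i\Rightarrow g_*\mu(\sigma)>\sum_i q_i$ enumerate precisely the undergraph of the measure $g_*\mu$, so nothing further needs checking.

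\emph{The weights in your single-test alternative.} The weights $2^{-|g|}$ are not summable when spheres in $G$ grow faster than $2^n$. This happens for any amenable group of exponential growth once the generating set is enlarged, and no normalizing constant $c$ repairs a divergent sum. Weights $(4|S|)^{-|g|}$ work, since there are at most $(2|S|)^n$ elements of word length $n$.
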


Because they have atoms, neutral measures cannot be invariant under free actions of countable groups. On the other hand:
\begin{example}
    Let $G$ be a finite group acting computably and continuously on $2^{\N}$. Then $G\mu = \frac{1}{|G|}\sum_{g\in G} g\mu$ is a $G$-invariant measure. Furthermore, if $T$ is a universal test, $GT$ is as well (since $GT \geq \frac{1}{|G|}T $), and any neutral measure $\mu$ for $T$ has $G\mu$ neutral for $GT$.
\end{example}
We can, for example, discuss neutral measures which are invariant under the $\Z_2$ action on $2^\Z$ by flipping all bits. Intuitively, we might expect Amenable group actions (on $2^G$ for $G$ countable amenable) to have a similar class of invariant measures, as they still have invariant means. This would require restricting ourselves to tests which do not force a passing measure to be non-invariant.

We have seen that translation and (global) gauge invariance are possible to guarantee with appropriate tests. We will now see something much stronger: that the existence of Gibbs measures for lattice models with finite interaction distance is also a consequence of writing the Gibbs statistics conditions as tests.

\begin{example}
    Fix $T > 0$. Take $X = 2^{\Z^n}$ and $H_1 :{2^{\{-k,..0,...k\}^n}} \rightarrow \R$. For the side-length $2L+1$ cube $K$ around the origin in $\Z^n$ define $H_L: 2^{\Z^n} \rightarrow \R $ by 
    $$ H_L(f) = \sum_{x\in K} H_1(f\upharpoonright_{{\{-k,..0,...k\}^n}})$$

    Let $$Z_L(f) = \sum_{h:h(z)\neq f(z) \implies z\in K} e^{-H_L/T}$$

    where $f$ is a lattice configuration $f:\Z^n\rightarrow \{0,1\}$ and define $G_L: P(2^{\Z^n}) \rightarrow P(2^{\Z^n})$ as the continuous linear completion of its action on Dirac measures, defined here as:

    $$ G_L(\delta f) = \frac{1}{Z_L (f)}\sum_{h:h(z)\neq f(z) \implies z\in K} e^{-H_L(h)/T}\delta h $$    
\end{example}

If $\mu \geq G_L(\mu)$ for all $L$, then $\mu$ is what is called a Gibbs measure for $H_1$ at temperature $T$. These are the measures that represent the expected physical state of a lattice with this energy function. The reason this state is expected is that it maximizes entropy given an expected energy constraint (indeed $G_L$ can be derived from ``freezing" probabilities external to the cube $K$ and maximizing the entropy of cube values subject to an expected energy using the Lagrange multiplier $1/T$).

$G_L$ is also idempotent, and $G_{L+k} G_L = G_{L}G_{L+k} = G_{L+k}$

$G_L$ is a test, and we may define a test $G$ by $G = \sum_L 2^{-L}G_L $. $G(\mu)$ is always a measure, and so if $\G(\mu)\leq \mu$, the two are equal. If $\mu$ is a fixed point of $G$, it is a fixed point of $G^n$, and since $(1-G_L)G^n\rightarrow 0$, $\mu$ is also a fixed point of $G_L$.

\begin{thm}
    Gibbs measures exist at arbitrary positive temperature and, if $H_1$ is a computable function and at computable temperature, form a nonempty $\Pi_1^0$ class.
\end{thm}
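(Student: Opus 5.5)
The plan is to apply Theorem \ref{thm:mainthm} to the test $G=\sum_L 2^{-L}G_L$ and then, for effectivity, to repeat the argument from the corollary on $\Pi^0_1$ classes of neutral measures.

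\textbf{Step 1: $G$ is a test.} First I check that each $G_L$, and hence $G$, is a test in the sense of Section \ref{sec:Tests}, i.e.\ a nondecreasing map $A\to B$.

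\begin{itemize}
\item For a cylinder $\sigma$, the value $G_L(\mu)[\sigma]$ is an integral against $\mu$ of a function of the configuration. That function depends only on the restriction of the configuration to the cube $K$ enlarged by $k$ together with the coordinates of $\sigma$. Hence $G_L(\mu)[\sigma]$ is a finite linear combination, with nonnegative continuous coefficients, of the values $\mu(\tau)$ for cylinders $\tau$ of bounded length.
\item Given a condition $a\in A$, I define $G_L(a)$ to be the semimeasure of finite support whose values are dyadic lower bounds of $\inf_{\mu\models a} G_L(\mu)[\sigma]$. These are taken only on cylinders whose length is bounded by the domain of $a$.
\item The map $a\mapsto G_L(a)$ is monotone, since shrinking the set of admissible $\mu$ raises the infimum. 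Its supremum over the conditions satisfied by $\mu$ recovers $G_L(\mu)$, because these cylinder functionals are continuous in the product topology.
\item Monotonicity and this supremum property survive the weighted sum, so $G$ is a test. Also $G(\mu)$ has total mass $\sum_L 2^{-L}$; after normalizing the weights to sum to $1$, $G(\mu)$ is a probability measure.
\end{itemize}

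\textbf{Step 2: existence.} Theorem \ref{thm:mainthm} gives a measure $\mu$ with $G(\mu)\le\mu$ pointwise. Since both are probability measures, this forces $G(\mu)=\mu$. By the remark preceding the theorem, a fixed point of $G$ is a fixed point of every $G_L$. This uses that $G_L$ is idempotent, that $G_{L+k}G_L=G_{L+k}$, and that $(1-G_L)G^n\to0$. So $\mu\ge G_L(\mu)$ for all $L$, which means $\mu$ is a Gibbs measure at temperature $T$. I expect this to be the main obstacle. The limit $(1-G_L)G^n\to 0$ needs care: $G^n$ is a mixture of products $G_{L_1}\cdots G_{L_n}$, and all of its mass eventually lies on words in which some $L_i\ge L$. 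On those words the factor $G_{L_i}$ absorbs $G_L$, using commutation as well as the stated composition rule. I would prove this by bounding the weight of the words with all $L_i<L$ by $(1-2^{-L})^n$ (after normalization), which tends to $0$.

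\textbf{Step 3: effectivity.} Assume $H_1$ and $T$ are computable. Then for each $\sigma$ the coefficients defining $G(\mu)[\sigma]$ are computable uniformly in $L$, and the tail $\sum_{L>N}2^{-L}$ is computably small. So $G(\mu)[\sigma]$ is lower semicomputable from an enumeration of $\mu$, while $\mu(\sigma)$ is upper c.e. As in the corollary, failure of $G(\mu)\le\mu$ is witnessed at a finite stage by some $\sigma$. The Gibbs measures, being exactly the measures passing $G$, therefore form a $\Pi^0_1$ class, and this class is nonempty by Step 2.
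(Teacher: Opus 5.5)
Your proposal is correct and follows the paper's proof. You apply Theorem \ref{thm:mainthm} to $G=\sum_L 2^{-L}G_L$, use that a fixed point of $G$ is fixed by every $G_L$, and obtain the $\Pi^0_1$ bound from finite-stage failure exactly as in the neutral-measure corollary; your extra details fill in steps the paper only asserts, namely checking that $G$ is a test, the word-absorption bound giving $(1-G_L)G^n\to 0$, and the observation that Gibbs measures are exactly the measures passing $G$.
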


\begin{proof}
    By Theorem \ref{thm:mainthm}, there exists a measure passing $G$, which is a fixed point, and therefore a Gibbs measure at temperature $T$. The test $G$ is computable if $T$ is, and thus failure at some finite stage is a computable condition, and the measures which pass are a $\Pi_1^0$ class.
\end{proof}

The most common introductory example of a lattice model is an Ising model, a model of ferromagnetism in which the local state is the up or down directionality of a spin, so a state space $\{-1,1\}$. The relevant $H_1$ here is \[H_1(f) =\sum_{1\leq i\leq n} f(0)(f(e_i)+f(-e_i))\] 
which means that all interactions are with nearest neighbors on the lattice, and the model enjoys a global gauge symmetry given by flipping all spins to their negatives. Such models exhibit phase transitions for $n\geq 2$, which are associated with the \textit{spontaneous breaking} of the global gauge symmetry, in which the single ergodic Gibbs state above a critical temperature splits into a ``mostly up" and ``mostly down" pair of states. Tong \cite{Tong} is an excellent resource for the physical point of view here. At the critical temperature, there are many odd properties exhibited by the associated Gibbs measure, such as the formation of features at all scales \cite{CFT}, and, more quantitatively, multifractality in both the Ising model \cite{IsingMulti}, and related models \cite{DESOUSA2026117517}. More on multifractality will be discussed in the next section \ref{sec:Factor}.

We note here that the existence of Gibbs measures has been proven in this situation and much more difficult situations than this, see for example \cite{Sarig_2003} \cite{Mauldin_Urbański_2001}. The reason we present it here is the relative simplicity of the proof, which we get essentially for free, discounting the cost of proving existence of neutral measures. This suggests that the two are related, and indeed Kolmogorov complexity, which can be used to construct the alternative hypothesis for computable measures on $2^\N$, can be constructed (up to multiplicative error) via thermodynamic means, as in Tadaki's 2008 paper \cite{tadaki2008} and more generally in Baez and Stay \cite{baezstay}. More speculatively, it may be that neutral measures themselves may be constructed by thermodynamic means.

\subsection{Tests Which Factor Through Other Spaces}\label{sec:Factor}
\begin{dfn}
    The \textbf{Dimension Distribution} of a probability measure $\mu$ on a compact metric space $(X,d)$ is 
    $$Ddist(\mu, D) = \mu(d^{-1}[0,D])$$
\end{dfn}

$Ddist(\mu, D)$ is linear in $\mu$ and increasing in $D$. Since it increasing in $D$ and is bounded, it is of bounded variation, and therefore has an associated Lebesgue-Stieltjes measure, which, intuitively speaking, is the probability for randomly drawing a dimension $D$ real when randomly choosing reals using $\mu$. We will call this the ``dimension probability". If $(X,d)$ is finite dimensional, the map taking $\mu$ to its dimension probability is a positive linear total-mass-preserving map from Radon measures on $X$ to measures on $[0,\infty)$

Let $C:2^{\N} \rightarrow [0,1]$ be the map given by $f\rightarrow \sum_nf_{n}2^{-n}$. A measure $\mu$ on Cantor space will push forward via $C$ to a measure $C_*\mu$ on $[0,1]$

\begin{lem}
 If $\mu$ is neutral on $2^\N$, $C_*(\mu)$ is neutral on $[0,1]$
\end{lem}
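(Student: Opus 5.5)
The plan is a pull-back argument: a test for $C_*\mu$ on $[0,1]$ will be pulled back along $C$ to a test on $2^\N$. Neutrality of $\mu$ is then used against that pulled-back test, and the result is pushed forward again. First I fix what neutrality on $[0,1]$ means, mirroring the $2^\N$ definitions. Measures on $[0,1]$ are presented by enumerating from below their values on the dyadic intervals $I_\sigma=C([\sigma])$, which are closed intervals of length $2^{-|\sigma|}$. A c.e. test on $[0,1]$ is an enumeration reduction from this undergraph to a semimeasure $\tau$ on the dyadic intervals. $C_*\mu$ is neutral if it passes a universal such test, so that $C_*\mu(I_\sigma)\ge \tau(I_\sigma)$ up to the projective constant. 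Since $C$ is a surjection that is injective off the countably many dyadic rationals, dyadic intervals correspond to cylinders: $C^{-1}(I_\sigma)\supseteq[\sigma]$, with only the two endpoints' extra preimages added.

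\textbf{Step 1 (the input side).} I show that the undergraph of $C_*\mu$ is enumeration reducible to that of $\mu$, uniformly. The relation is $C_*\mu(I_\sigma)=\mu(C^{-1}I_\sigma)\ge\mu([\sigma])$. The difference is the mass of the at most two atoms in $2^\N$ over the dyadic endpoints that lie outside $[\sigma]$. Each such atom's mass is the limit of $\mu$ on a decreasing sequence of cylinders $[\rho]$, so it is upper c.e. Lower enumerations of $\mu([\sigma]\cup[\rho_k])$ over all $k$ give a lower enumeration of $C_*\mu(I_\sigma)$.

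\textbf{Step 2 (the test side).} Given a universal c.e. test $U$ on $[0,1]$, I compose with Step 1 to obtain a c.e. test $T$ on $2^\N$. It takes $\mu$ to the semimeasure $T(\mu)[\sigma]=\tfrac12 U(C_*\mu)[I_\sigma]$. This semimeasure is superadditive because the intervals $I_{\sigma0}$ and $I_{\sigma1}$ overlap only in a point. The factor $\tfrac12$ absorbs the boundary double counting if $U$'s output is thought of as a measure-like object on overlapping closed intervals. Since $T$ is c.e. and the universal c.e. test on $2^\N$ projectively dominates every c.e. test, a neutral $\mu$ gives $\mu([\sigma])\ge q\,U(C_*\mu)[I_\sigma]$ for some $q>0$ and all $\sigma$.

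\textbf{Step 3 (conclusion).} Combining with $C_*\mu(I_\sigma)\ge\mu([\sigma])$ gives $C_*\mu\ge_p U(C_*\mu)$ on dyadic intervals. So $C_*\mu$ weakly passes, hence passes up to rescaling, a universal test on $[0,1]$, and is neutral.

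\textbf{Main obstacle.} The hard part is the boundary behaviour at dyadic rationals, both in Step 1 and in checking that the pulled-back $T$ is genuinely a semimeasure-valued monotone test in the sense of Section~\ref{sec:TestDef}. If the atom-correction argument in Step 1 is awkward, I would first try to avoid it entirely. Because $U(C_*\mu)[I_\sigma]$ is only enumerated from below, one can define $T$ to use only the lower information $\mu([\sigma])\le C_*\mu(I_\sigma)$, replacing the exact value with this monotone lower bound. Monotonicity of $U$ in its conditions should then make this legitimate.
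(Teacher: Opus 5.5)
Your Step 1 is false, and the boundary issue you flag is the whole difficulty rather than a technicality. The union over $k$ of lower enumerations of $\mu([\sigma]\cup[\rho_k])$ enumerates rationals below the \emph{largest} of these decreasing values, not below their limit $C_*\mu(I_\sigma)$. As you observe yourself, the atom masses are only upper c.e. So $C_*\mu(I_\sigma)=\mu(C^{-1}I_\sigma)$, the measure of a closed set, is upper c.e.\ from $\mu$ but in general not lower c.e. For example, take a computable $\mu$ whose atom at $01^\infty$ has right-c.e.\ non-computable mass; then $C_*\mu([1/2,1])=\mu([1])+\mu(\{01^\infty\})$ is not left-c.e. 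Hence your $T$ is not a c.e.\ test, and the universal test on $2^\N$ need not dominate it. Your fallback goes the wrong way. Feeding $U$ only the weaker lower bounds $\mu([\sigma])$ yields, by monotonicity, a semimeasure \emph{below} $U(C_*\mu)$, and $\mu$ dominating something smaller tells you nothing about $U(C_*\mu)$. Switching to open dyadic intervals $J_\sigma$ would make $C_*\mu\le_e\mu$ true. But then $C_*\mu(J_\sigma)=\mu([\sigma])-\mu(\{\sigma0^\infty\})-\mu(\{\sigma1^\infty\})\le\mu([\sigma])$, which reverses the inequality Step 3 needs. A neutral $\mu$ has an atom at every computable real, so these boundary masses are always present. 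The factor $\tfrac12$ does not help either: for a measure-like output $\tau$ on closed intervals, $\tfrac12\tau(I_{\sigma0})+\tfrac12\tau(I_{\sigma1})=\tfrac12\tau(I_\sigma)+\tfrac12\tau(\{m_\sigma\})$, where $m_\sigma$ is the midpoint.

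The missing idea is to compare $U(C_*\mu)$ with $C_*\mu$ itself rather than with $\mu$ on cylinders, which the paper does through level sets. Present measures on $[0,1]$ through open intervals, so that $C_*\mu\le_e\mu$ really holds. Let $A_n$ be the set of cylinders lying over some interval where $U(C_*\mu)$ exceeds $2^n C_*\mu$. This set is enumerable from $\mu$, because $U(C_*\mu)$ of an open interval is lower c.e.\ and $C_*\mu$ of its closure is upper c.e. A covering estimate bounds $\mu(|A_n|)$ by roughly $2^{-n}$, so $S(\mu)=\sum_n\chi_{A_n}\mu$ is a c.e.\ test on $2^\N$. Since the $A_n$ decrease in $n$ and a neutral $\mu$ charges every cylinder, weakly passing $S$ bounds the ratio of $U(C_*\mu)$ to $C_*\mu$ on intervals. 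Only $\mu$-masses of open sets and pointwise membership are pulled back along $C$, so the dyadic atoms never interfere. In the Levin--G\'acs formalism, with test functions $t(y,\nu)$ satisfying $\int t(\cdot,\nu)\,d\nu\le 1$, your direct pullback $t(C(x),C_*\mu)$ would work verbatim. The obstruction is specific to transporting a semimeasure across the mismatch between cylinders and intervals.
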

\begin{proof}
    Suppose $T$ is c.e. a test on $[0,1]$, that is, a map from measures on $[0,1]$ to semimeasures on $[0,1]$ which is defined via enumeration reduction. Define
    $$ A_n=\{\sigma\in 2^{<\N}: \exists a,b\in \Q, C(\sigma) \supset(a,b),  2^n C_*(\mu)\leq TC_*(\mu)[(a,b)]\}$$
    And define $|A_n|$ to be the topological realization of the union of elements of $A_n$. Let $I_k$ be an enumeration of witnessing $(a,b)$ for maximal cylinders in $A_n$, we have
    $$C_*(\mu)(C(|A_n|))\leq 2^{-n}\sum_{k\in\N}TC_*(\mu)(I_k)\leq 2^{-n}TC_*(\mu)([0,1])$$
    and so $\mu(|A_n|)\leq 2^{-n}$. The map $S:\mu \rightarrow \sum_n \chi_{A_n}\mu$ is given by a c.e. test on $2^\N$. $S$ is weakly passed by $\mu$ if and only if $C_*(\mu)$ weakly passes $T$. If $T$ is taken to be a universal c.e. test on $[0,1]$, $\mu$ passes $S$ iff $C_*(\mu)$ is neutral on $[0,1]$, so neutral measures push forward to neutral measures.
\end{proof}

Let $h$ be the inverse of $-xlog(x)-(1-x)log(1-x)$ on $[0,1/2]$. Given a measure $\rho$ on $[0,1]$, we define a measure $P_\rho$ on $2^{\N}$ by

$$P_\rho = \int_{[0,1]} P_{h(x)}d\rho(x)$$
where $P_q$ is defined as in \ref{Pq}. $P_\rho$ is enumerable from $\rho$. So $P_{C_*(\mu)}$ is enumerable from $\mu$, and thus if $\mu$ is Neutral, $\mu >_{p} P_{C_*(\mu)}$.

\begin{lem}
    $P_\rho$ has dimension probability $\rho$
\end{lem}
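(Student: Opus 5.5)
We need to show that the dimension probability of $P_\rho$ is $\rho$. Equivalently, $Ddist(P_\rho,D)=\rho([0,D])$ for every $D$, where $d$ denotes pointwise dimension with respect to $P_\rho$ itself. Since $P_\rho=\int P_{h(x)}\,d\rho(x)$ and $Ddist$ is linear, it suffices to establish the pointwise statement: for $\rho$-almost every $x$, $P_{h(x)}$-almost every real $y$ has $d_{P_\rho}(y)=x$. Integrating then gives
\[
P_\rho\bigl(\{y: d_{P_\rho}(y)\le D\}\bigr)=\int \mathbf 1_{x\le D}\,d\rho(x)=\rho([0,D]),
\]
and so the Lebesgue--Stieltjes measure of $D\mapsto Ddist(P_\rho,D)$ is $\rho$. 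One caveat: the notation $h$ inverts the entropy map, so the Bernoulli parameter $q=h(x)$ has entropy $x$. This matches the computation in Proposition \ref{entropydensity}.

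\textbf{Step 1: the upper bound.} Fix $x$ and put $q=h(x)$. By the strong law of large numbers (as in Proposition \ref{entropydensity}), for $P_q$-a.e.\ $y$ the frequency of ones in $y\upharpoonright_n$ tends to $q$. Hence
\[
-\tfrac1n\log P_q([y\upharpoonright_n])\to S(q)=x .
\]
Balls of radius $2^{-n}$ are cylinders, and $P_\rho\ge \rho(\{x\})P_q$ only when $\rho$ has an atom, so in general we cannot compare directly. Instead we use the mixture directly:
\[
P_\rho([y\upharpoonright_n])=\int q'^{k_n}(1-q')^{n-k_n}\,d\rho(x'),\qquad q'=h(x'),
\]
where $k_n/n\to q$. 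The integrand is, to exponential order, $2^{-n(x+I(q\|q'))}$, where $I$ is the relative entropy, maximized at $q'=q$. This gives the bound $-\tfrac1n\log P_\rho([y\upharpoonright_n])\ge x+o(1)$, that is, $d_{P_\rho}(y)\ge x$.

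\textbf{Step 2: the matching bound (the main obstacle).} Here we need $\rho$ to put enough mass near $x$. If $x$ is in the support of $\rho$, then for every $\epsilon$ the set $\{x':|x'-x|<\epsilon\}$ has positive $\rho$-mass, and on it the integrand is at least $2^{-n(x+c\epsilon)}$. Taking the liminf and then letting $\epsilon\to0$ gives the upper bound $d_{P_\rho}(y)\le x$. Since $\rho$-a.e.\ $x$ lies in $\operatorname{supp}\rho$, this suffices. The delicate point is making the Laplace-type estimate uniform. I would handle it by splitting $[0,1]$ into a window $|x'-x|<\epsilon$ and its complement. On the complement, $I(q\|q')$ is bounded below because $h$ is a homeomorphism, so that contribution is exponentially small and cannot affect the rate. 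Care is needed near $q'=0$, where $\log q'$ blows up, and with the symmetric ambiguity $q$ versus $1-q$, since $h$ maps into $[0,1/2]$. The first issue only helps the lower bound on $-\log$. For the second, I would note that $y$'s frequency $q\le1/2$ pins down the branch, since a $q'>1/2$ is never produced.

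Combining Steps 1 and 2, for $\rho$-a.e.\ $x$ and $P_{h(x)}$-a.e.\ $y$ we get $d_{P_\rho}(y)=x$, and integrating over $\rho$ as above gives the claim.
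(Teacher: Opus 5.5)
Your argument is correct, but it takes a different and more explicit route than the paper. The paper regards $P_\rho$ on $2^{\Z}$ as a shift-invariant measure whose ergodic decomposition is the family $P_{h(x)}$ weighted by $\rho$. It asserts that the pointwise dimension $d$ is shift-invariant, so that its level sets are invariant sets. It then cites Proposition \ref{entropydensity} to say that each ergodic component $P_{h(D)}$ is carried by $d^{-1}(\{D\})$, and integrates against $\rho$.

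That route is short and gives a structural picture, with dimension acting as an invariant function that sorts the ergodic components. But Proposition \ref{entropydensity} is a statement about the $P_q$-pointwise dimension, whereas the $d$ in the lemma is the $P_\rho$-pointwise dimension, and the paper never justifies that the two agree at $P_q$-typical points. Moreover, the final identity $P_\rho(E)=\int P_{h(x)}(E)\,d\rho(x)$ holds for every Borel $E$, so invariance and ergodicity are not really what carries the argument.

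Your mixture computation supplies exactly that missing comparison, and it is self-contained:
\begin{itemize}
\item The exact identity $\max_{q'}q'^{k}(1-q')^{n-k}=(k/n)^{k}(1-k/n)^{n-k}$ shows that $-\frac1n\log P_\rho([y\upharpoonright_n])$ is at least the binary entropy of $k_n/n$. This gives $d_{P_\rho}(y)\ge x$ with no uniformity issues, so the phrase ``to exponential order'' in Step 1 can be replaced by this exact bound.
\item Restricting the integral to a window of positive $\rho$-mass around $x$ gives the reverse inequality. Such a window exists because $x\in\operatorname{supp}\rho$ for $\rho$-a.e.\ $x$. This even shows the $\liminf$ is a genuine limit.
\item Fubini then finishes the proof.
\end{itemize}
One simplification: in Step 2 you need no estimate on the complement of the window. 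For a lower bound on $P_\rho([y\upharpoonright_n])$ you can simply discard that part of the integral, since the integrand is nonnegative.
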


\begin{proof}

If $2^{\N}$ is viewed as $2^\Z$ then $P_\rho$ is an invariant measure with ergodic decomposition into Bernoulli measures given by the measure $\rho$, after parameterizing these Bernoulli measures by dimension. The pointwise dimension
$$d(x) = \liminf_{r\rightarrow 0}\frac{\log(P_{\rho}(B_r(x)))}{\log(r)} $$
is an invariant function (as $P_\rho$ itself is invariant), thus $d^{-1}(A)$ is an invariant set for $A\subset [0,1]$. The ergodic measure $P_{h(D)}$ assigns measure $1$ to $d^{-1}(\{D\})$ by \ref{entropydensity}, and thus
$$Ddist(P_\rho ,D) = P_\rho(d^{-1}([0,D]) ) = \int_{[0,1]} P_{q}(d^{-1}([0,D])) d\rho(q) = \rho([0,D])$$
\end{proof}

The lemma is the bulk of the work for the following theorem:

\begin{thm}
    The dimension probability of a neutral measure on $2^{\N}$ with the standard metric dominates a neutral measure on $[0,1]$

\end{thm}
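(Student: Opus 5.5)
The plan is to chain together the three facts established just above. Let $\mu$ be neutral on $2^\N$ and write $\rho = C_*(\mu)$. By the pushforward lemma, $\rho$ is neutral on $[0,1]$. By the remark preceding the lemma on $P_\rho$, the measure $P_\rho$ is enumerable from $\rho$, and $\rho$ is in turn enumerable from $\mu$ via the computable map $C$. So $P_\rho \leq_e \mu$, and the constant-in-$\mu$-data test $\mu \mapsto P_{C_*(\mu)}$ is a c.e. test. Since $\mu$ passes a universal c.e. test, it weakly passes this one: there is $q>0$ with $\mu \geq q P_\rho$ on all cylinders. Because both sides are measures, this extends to all Borel sets.

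Next I transfer the domination to dimension probabilities. Here the dimension $d$ must be the pointwise dimension computed with respect to a fixed reference: either the standard metric on $2^\N$ (so $\log$ of the ball radius), or the measure being analyzed. The statement says "with the standard metric", so I read $Ddist(\nu,D)=\nu(d^{-1}[0,D])$ with a fixed Borel function $d$. Then $\nu \mapsto$ (dimension probability of $\nu$) is a positive linear pushforward $d_*\nu$. Positivity and linearity turn $\mu \geq qP_\rho$ into $d_*\mu \geq q\, d_*P_\rho$. By the lemma that $P_\rho$ has dimension probability $\rho$, we get $d_*P_\rho=\rho$. Hence $d_*\mu \geq q\rho$, so the dimension probability of $\mu$ projectively dominates the neutral measure $\rho$, which is the claim.

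The main obstacle is the identification step: showing that the function $d$ used in the lemma about $P_\rho$ is the same fixed Borel function in both places. In that lemma, $d$ was defined using $P_\rho$ itself. I would replace this with a reference-independent quantity. The cleanest choice is the effective (constructive) dimension $\liminf_n K(x\upharpoonright_n)/n$. This is a single Borel function, and by the Brudno/Shannon--McMillan theorem it equals $S(h(x))$, that is, $x$, for $P_{h(x)}$-almost every real. Using it, the ergodic-decomposition computation in the lemma goes through verbatim, with $P_q(d^{-1}[0,D])=\chi_{[0,D]}$ of the parameter. I would also check that $h$ is parameterized so that $P_{h(x)}$ has entropy exactly $x$. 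Measurability of $d$ is routine, since it is a liminf of Borel functions.
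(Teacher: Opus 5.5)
\textbf{There is a genuine gap: the proposal proves a theorem about a different dimension function, not the one in the statement.}

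Your chain of reductions is exactly the paper's: neutrality of $\rho=C_*\mu$, then $P_\rho\le_e\mu$, hence $\mu\ge qP_\rho$, then the lemma that $P_\rho$ has dimension probability $\rho$. You are also right that the transfer from $\mu\ge qP_\rho$ to dimension probabilities is the delicate point. The paper handles it in one clause, relying on its unproved assertion that $Ddist$ is linear in the measure.

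In the paper, the dimension probability of $\nu$ is the law under $\nu$ of $d_\nu(x)=\liminf_n -\frac{1}{n}\log_2\nu([x\upharpoonright n])$. This is the pointwise dimension relative to $\nu$ itself. The phrase ``with the standard metric'' refers to the metric used to form the balls $B_r(x)$, which are cylinders; it does not mean a measure-independent dimension function. The lemma on $P_\rho$ is about $d_{P_\rho}$. The corollary that follows applies Cutler's bound and an energy estimate to $\mu$ restricted to $d_\mu^{-1}(\alpha)$, and these are arguments about $d_\mu$. Replacing $d_\nu$ by the effective dimension $\liminf_n K(x\upharpoonright n)/n$ makes the transfer a trivial pushforward inequality, but it sidesteps the obstacle you identified rather than overcoming it.

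The obvious pointwise consequence of $\mu\ge qP_\rho$, namely $d_\mu\le d_{P_\rho}$ everywhere, is not enough either. It only yields $Ddist(\mu,D)\ge q\rho([0,D])$, an inequality of distribution functions. The theorem, and its later use for atoms and for dominating Lebesgue measure, needs domination as measures.

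The missing idea is a differentiation argument. This is exactly what makes the measure-relative dimension probability linear.
\begin{itemize}
\item Since $P_\rho\le q^{-1}\mu$, the density $f=dP_\rho/d\mu$ exists and satisfies $f\le q^{-1}$.
\item By martingale convergence, $P_\rho([x\upharpoonright n])/\mu([x\upharpoonright n])\to f(x)$ for $\mu$-a.e.\ $x$, hence for $P_\rho$-a.e.\ $x$.
\item Moreover $f>0$ holds $P_\rho$-a.e.
\end{itemize}
So along $P_\rho$-almost every $x$, the logarithms of $\mu([x\upharpoonright n])$ and $P_\rho([x\upharpoonright n])$ differ by $O(1)$. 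This gives $d_\mu=d_{P_\rho}$ $P_\rho$-a.e., and therefore, for every Borel $A\subseteq[0,1]$,
\[
\mu\bigl(d_\mu^{-1}(A)\bigr)\ \ge\ qP_\rho\bigl(d_\mu^{-1}(A)\bigr)\ =\ qP_\rho\bigl(d_{P_\rho}^{-1}(A)\bigr)\ =\ q\rho(A).
\]
With this step you can use the paper's lemma as stated and need not change $d$ anywhere.

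If you want to re-prove that lemma, note that its proof needs $d_{P_\rho}=t$ for $P_{h(t)}$-a.e.\ real and $\rho$-a.e.\ $t$, not merely $d_{P_{h(t)}}=t$. Your Brudno idea supplies this if you combine it with the relativized Levin--Schnorr theorem and Shannon--Fano coding. Together these identify $d_{P_\rho}$, at $P_\rho$-random points, with effective dimension relative to a representation of $P_\rho$. Brudno's theorem holds relative to any oracle and for non-computable Bernoulli parameters, so this closes the argument.
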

\begin{proof}

Let $\mu$ be neutral, $C_*(\mu)$ is also neutral. $P_{C_*(\mu)}$ is a measure on $2^\N$ which is enumerable from $\mu$. Thus $\mu \geq_p P_{C_*(\mu)}$, and so the  of $\mu$ dominates that of $P_{C_*(\mu)}$, which is $C_*(\mu)$.

\end{proof}

Since the dimension probability of a neutral measure dominates a neutral measure, it has many of the same properties of a neutral measure, for example it will have a dense set of atoms, and will projectively dominate Lebesgue measure on the interval. There is a relationship between this fact and the fine dimension spectrum, as defined in Falconer's book \cite{Falconer}. We have the following:

\begin{cor}
    Let $\mu$ be a neutral measure. The Hausdorff dimension of the set of points of pointwise dimension $d(x)= \alpha$ is $\alpha$ for a dense set of $\alpha$. 
\end{cor}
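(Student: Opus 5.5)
The plan is to combine the preceding theorem with a lower bound on Hausdorff dimension of level sets coming from the Bernoulli components. Let $\mu$ be neutral and write $E_\alpha = \{x\in 2^\N : d_\mu(x) = \alpha\}$. The first step fixes the dense set of $\alpha$. By the preceding theorem, the dimension probability of $\mu$ dominates the neutral measure $C_*(\mu)$ on $[0,1]$. By the discussion after that theorem, it therefore has a dense set of atoms. More concretely, for every computable $\alpha\in[0,1]$ the Dirac mass $\delta_\alpha$ is a computable measure on $[0,1]$, so $C_*(\mu)$ projectively dominates it. The candidate dense set is therefore the computable reals $\alpha\in[0,1]$. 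For each such $\alpha$, Proposition \ref{entropydensity} already gives that $\mu \geq_p P_{q}$ with $q = h(\alpha)$, and that $P_q$ gives full measure to reals of $P_q$-pointwise dimension $\alpha$.

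The second step transfers the pointwise dimension from $P_q$ to $\mu$. Since $\mu \geq t P_q$ for some $t>0$, we have $\mu([x|_n]) \geq t P_q([x|_n])$, hence $d_\mu(x) \le d_{P_q}(x) = \alpha$ for $P_q$-a.e.\ $x$. The reverse inequality is the delicate point. I would handle it by intersecting with the set of $\mu$-random reals, which by neutrality is all of $2^\N$. The idea is a test built from $P_q$: the semimeasure $\sum_n 2^{-n}\,\chi_{\{\sigma : \mu(\sigma) > 2^{-|\sigma|(\alpha-\epsilon)}\}}\cdots$ is awkward. Instead I expect to restrict attention to the set $F_\alpha = \{x : d_\mu(x) = \alpha\}\cap\{x: d_{P_q}(x)=\alpha\}$ and argue directly. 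For the upper bound $\dim_H E_\alpha \le \alpha$ I would use the standard mass distribution / Billingsley lemma: if $\liminf_n \frac{-\log\mu([x|_n])}{n} \le \alpha$ on a set $E$, then $\dim_H E \le \alpha$. This holds for any probability measure $\mu$, via a Vitali-type covering by cylinders of $\mu$-mass at least $2^{-n(\alpha+\epsilon)}$, of which there are at most $2^{n(\alpha+\epsilon)}$ disjoint ones at each level.

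The third step is the lower bound $\dim_H E_\alpha \ge \alpha$, again via Billingsley's lemma, now applied to $P_q$. On the set $G = \{x : d_{P_q}(x) = \alpha,\ \text{in fact } \lim_n -\tfrac1n\log P_q([x|_n]) = \alpha\}$, which has $P_q$-measure one by the law of large numbers, every subset of positive $P_q$-measure has Hausdorff dimension at least $\alpha$. So it suffices to show that $P_q(E_\alpha)>0$, i.e.\ that $d_\mu(x)\ge\alpha$ on a $P_q$-positive set. \textbf{This is the main obstacle}: domination gives only $\mu \ge tP_q$, not an upper bound on $\mu$ near $P_q$-typical points, and the atoms of $\mu$ are dense.

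My first attempt would be a Borel–Cantelli/Fubini argument. For fixed $\epsilon$, let $B_n=\{\sigma\in 2^n: \mu(\sigma) > 2^{-n(\alpha-\epsilon)}\}$. Then $|B_n| < 2^{n(\alpha-\epsilon)}$, and each cylinder of $B_n$ that is $P_q$-typical has $P_q$-mass about $2^{-n\alpha}$. So $P_q(\bigcup B_n \cap \text{typical}) \lesssim 2^{-n\epsilon}$, which is summable. Hence $P_q$-a.e.\ $x$ lies in only finitely many $B_n$, giving $d_\mu(x)\ge \alpha-\epsilon$ almost surely. Together with step two, this gives $d_\mu = \alpha$ $P_q$-a.e., so $P_q(E_\alpha)=1$ and $\dim_H E_\alpha \ge \alpha$. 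If this counting argument works as expected, neutrality is only needed to supply $\mu\ge_p P_q$ for all computable $q$, and the density of computable $\alpha$ finishes the proof.
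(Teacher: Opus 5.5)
Your proof is correct, but it takes a different route from the paper's.

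\textbf{The paper's route.} It chooses $\alpha$ among the atoms of the dimension distribution of $\mu$, so that $\mu(d^{-1}(\alpha))>0$; these atoms come from the preceding theorem, through $C_*(\mu)$. It cites Cutler for $\dim_H d^{-1}(\alpha)\le\alpha$. For the lower bound it runs a Riesz-energy argument on $P=\mu|_{d^{-1}(\alpha)}/\mu(d^{-1}(\alpha))$, after asserting a uniform bound $P([\sigma])\le M2^{-(\alpha-\epsilon)|\sigma|}$.

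\textbf{Your route.} You use only $\mu\ge tP_{h(\alpha)}$ for computable $\alpha$. Your Borel--Cantelli count does go through once ``typical'' means $P_q([\sigma])\le 2^{-n(\alpha-\epsilon/2)}$. Then $P_q(B_n\cap\text{typical})\le 2^{-n\epsilon/2}$, and eventual typicality follows from the law of large numbers. This gives $d_\mu=\alpha$ $P_q$-a.e., hence $P_q(E_\alpha)=1$ and $\mu(E_\alpha)\ge t$, after which Billingsley's lemma yields both bounds. The real content is $\mu(E_\alpha)>0$: Billingsley's lemma applied to $\mu$ itself gives $\dim_H E_\alpha=\alpha$ for every such $\alpha$.

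\textbf{What your route buys.}
\begin{enumerate}
\item It needs only domination of the computable Bernoulli measures.
\item It makes explicit the almost-everywhere agreement $d_\mu=d_{P_q}$. The paper's preceding theorem uses this silently when it passes from $\mu\ge_p P_{C_*(\mu)}$ to domination of dimension distributions.
\item It sidesteps the paper's uniform Frostman bound. The nested-compact-sets argument does not prove that bound, and it in fact fails here. By your own step, $P\ge cP_{h(\alpha)}$, so $P([0^n])\ge c(1-h(\alpha))^n$. This decays at rate $\log_2\frac{1}{1-h(\alpha)}<\alpha$ when $0<\alpha<1$. The paper's argument would need an Egorov-type restriction to a positive-measure subset to be repaired.
\end{enumerate}

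The opening appeal to the preceding theorem and $C_*(\mu)$ is unnecessary and can be cut. So can the abandoned test-based sketch in your second step.
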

\begin{proof}
    The dimension distribution supports a dense set of atoms. Let $\alpha$ be one such atom. Per Cutler's review \cite{Cutler1993ARO}, the Hausdorff dimension of $d^{-1}(\alpha)$ is at most $\alpha$. We will now show that the dimension of this set is at least $\alpha$ by using the Riesz $s$-energy. As seen in chapter 4, theorem 4.13 of Falconer \cite{Falconer}, if a set supports a probability measure $P$ such that the double integral
    \[I_s(P) = \int\int \frac{1}{d(x,y)^s} dP(x)dP(y) <\infty\]
    that set has Hausdorff dimension at least $s$. We therefore fix $s < \alpha$ and take \[P = \frac{\mu|_{d^{-1}(\alpha)}}{\mu(d^{-1}(\alpha))} \]
    Since we know that the pointwise dimension of every point of $d^{-1}(\alpha)$ is at least $\alpha$ with respect to $P$. We claim that this implies that for $\epsilon>0$ there exists a uniform positive $M$ such that for all cylinders $\sigma$, $P([\sigma])\leq M 2^{-(\alpha-\epsilon)\sigma}$. If there are infinitely many $\sigma$ such that $P([\sigma])>2^{-(\alpha-\epsilon)|\sigma|} $, the collection
    $$ A_n = \{x\in 2^\N: \exists \sigma \text{ extending } x\upharpoonright n, P([\sigma])>2^{-(\alpha-\epsilon)|\sigma|} \}$$
    is a nested collection of nonempty compact sets, and thus nonempty. This contradicts the global bound on pointwise dimension. Thus we may account for the finitely many $\sigma$ not obeying this strict bound via a sufficiently large $M$. We now bound the inner integral as
    $$ \int \frac{1}{d(x,y)^s}dP(x) \leq \sum_{n=1}^\infty 2^{sn}P([y\upharpoonright n])\leq M\sum_{n=1}^\infty 2^{sn}2^{-(\alpha-\epsilon)n}$$
    For $\epsilon$ small enough, this is uniformly summable, and thus the double integral is convergent. This concludes the proof.
\end{proof}

This also establishes that the Hausdorff dimension of the set $d^{-1}((\alpha-\epsilon, \alpha])$ is always $\alpha$ for $\epsilon > 0$. We end with an example that emphasizes that this does not imply that the Hausdorff dimension of $d^{-1}(\alpha)$ for all $\alpha$.

\begin{example}
    Denote by $C_n$ the middle fraction Cantor set with Hausdorff dimension $1- 2^{-n}$ and denote by $\mu_n$ the uniform (coin-flip) measure on that set. Let $I_n$ the interval $[1-2^{-n},1-2^{-n}+2^{-n-2}]$ and take
    \[A =[0,1]\times\{1\}\cup \bigcup_{n} C_n\times I_n \]
    Endow it with the (properly normalized) sum of product measures
    \[\mu = \sum_{n =1}^\infty \mu_n\times \lambda\]
    where $\lambda$ is the Lebesgue measure.
    The pointwise dimension of each point in $C_n\times I_n$ is $2-2^{-n}$, and at each point of $[0,1]\times\{1\}$ is $2$ by a simple estimate. The Hausdorff dimension of this final line segment is, of course, one, but the Hausdorff dimension of the set of points with pointwise dimension $2 - 2^{-n}$ is $2-2^{-n}$ itself.
\end{example}

\section{Bibliography}\label{sec:bib}
\bibliographystyle{plain}
\bibliography{bibliography}

\begin{thebibliography}{10}

\bibitem{baezstay}
John Baez and Mike Stay.
\newblock Algorithmic thermodynamics.
\newblock {\em Computability of the Physical, Mathematical Structures in Computer Science 22}, 2012.

\bibitem{Cutler1993ARO}
Colleen~D. Cutler.
\newblock A review of the theory and estimation of fractal dimension.
\newblock 1993.

\bibitem{daymiller}
Adam Day and Joseph Miller.
\newblock Randomness for non-computable measures.
\newblock {\em Transactions of the American Mathematical Society}, 2013.

\bibitem{DESOUSA2026117517}
Marcos~A.A. {de Sousa}, Francisco~A.B.F. {de Moura}, Anderson~L.R. Barbosa, and Adauto~J.F. {de Souza}.
\newblock Multifractality analysis of phase transition of the two- and three-dimensional xy models.
\newblock {\em Chaos, Solitons and Fractals}, 202:117517, 2026.

\bibitem{DH}
Rodney~G. Downey and Denis~R. Hirschfeldt.
\newblock {\em Algorithmic Randomness and Complexity}.
\newblock Springer New York, NY, 2010.

\bibitem{Falconer}
Kenneth Falconer.
\newblock {\em Fractal Geometry}.
\newblock John Wiley and Sons, Ltd, 2003.

\bibitem{enumerationOG}
Richard~M. Friedberg and Jr. Hartley~Rogers.
\newblock Reducibility and completeness for sets of integers.
\newblock {\em Z. Math. Logik Grundlag.}, 1959.

\bibitem{gacslecturenotes}
Peter Gacs.
\newblock Lecture notes on descriptional complexity and randomness, 2021.

\bibitem{Gacs}
Peter Gács.
\newblock Uniform test of algorithmic randomness over a general space.
\newblock {\em Theoretical Computer Science}, 341(1):91--137, 2005.

\bibitem{levin}
L.~A. Levin.
\newblock Uniform tests of randomness.
\newblock {\em Soviet Math.}, 1973.

\bibitem{lutz2016}
Jack~H. Lutz and Neil Lutz.
\newblock Algorithmic information, plane kakeya sets, and conditional dimension, 2016.

\bibitem{MARTINLOF}
Per Martin-Löf.
\newblock The definition of random sequences.
\newblock {\em Information and Control}, 9(6):602--619, 1966.

\bibitem{Mauldin_Urbański_2001}
R.~Daniel Mauldin and Mariusz Urbański.
\newblock Gibbs states on the symbolic space over an infinite alphabet.
\newblock {\em Israel Journal of Mathematics}, 125(1):93–130, Dec 2001.

\bibitem{CFT}
David~Sénéchal Philippe~Francesco, Pierre~Mathieu.
\newblock {\em Conformal Field Theory}.
\newblock Springer New York, NY, 1997.

\bibitem{Reimann1}
Jan Reimann and Theodore Slaman.
\newblock Measures and their random reals.
\newblock {\em Transactions of the American Mathematical Society}, 2015.

\bibitem{Sarig_2003}
Omri Sarig.
\newblock Existence of gibbs measures for countable markov shifts.
\newblock {\em Proceedings of the American Mathematical Society}, 131(6):1751–1758, Jan 2003.

\bibitem{kakutani}
Kakutani Shizuo.
\newblock A generalization of brouwer's fixed point theorem.
\newblock {\em Duke Mathematical Journal}, 1941.

\bibitem{enumeration}
Theodore~A. Slaman and Mariya~I. Soskova.
\newblock The enumeration degrees: local and global structural interactions.
\newblock {\em Foundations of Mathematics, Contemp. Math}, 2017.

\bibitem{tadaki2008}
Kohtaro Tadaki.
\newblock A statistical mechanical interpretation of algorithmic information theory, 2008.

\bibitem{Tong}
David Tong.
\newblock Statistical field theory.

\bibitem{IsingMulti}
Longfeng Zhao, Wei Li, Chunbin Yang, Jihui Han, Zhu Su, and Yijiang Zou.
\newblock Multifractality and network analysis of phase transition.
\newblock {\em PLOS ONE}, 12(1):1--23, 01 2017.

\end{thebibliography}

\end{document}